\newcommand\numberthis{\addtocounter{equation}{1}\tag{\theequation}}
\numberwithin{equation}{section}
\theoremstyle{plain}
\newtheorem{thm}{Theorem}[section]
\newtheorem*{thm*}{Theorem}
\newtheorem{lem}[thm]{Lemma}
\newtheorem{cor}{Corollary}
\newtheorem*{cor*}{Corollary}
\theoremstyle{definition}
\theoremstyle{remark}
\newtheorem{case}{Case}
\newcommand{\BigOb}[1]{\ensuremath{\operatorname{O}_b\left(#1\right)}}
\newcommand{\BigO}[1]{\ensuremath{\operatorname{O}\left(#1\right)}}
\def\pmod#1{\allowbreak\mkern10mu({\operator@font mod}\,\,#1)} 
\begin{document}
\subjclass[2010]{11A05, 11M06, 11N37}
\keywords{Generalized $\gcd$, Generalized lcm, Asymptotic formulas, Perron formulas, Riemann zeta function, totient function, $b$-free numbers}

 \title{Mean value estimates of gcd and lcm-sums}

\author{Sneha Chaubey}
\email{sneha@iiitd.ac.in}

\author{Shivani Goel}
\email{shivanig@iiitd.ac.in}

\address[]{Department of Mathematics, IIIT Delhi, New Delhi 110020}

\begin{abstract}
We study the distribution of the generalized gcd and lcm functions on average. The generalized gcd function, denoted by $(m,n)_b$, is the largest $b$-th power divisor common to $m$ and $n$. Likewise, the generalized lcm function, denoted by $[m,n]_b$, is the smallest $b$-th power multiple common to $m$ and $n$. We derive asymptotic formulas for the average order of the arithmetic, geometric, and harmonic means of $(m,n)_b$. Additionally, we also deduce asymptotic formulas with error terms for the means of $(n_1,n_2,\cdots, n_k)_b$, and $[n_1,n_2,\cdots, n_k]_b$ over a set of lattice points, thereby generalizing some of the previous work on gcd and lcm-sum estimates.

\end{abstract}

\maketitle
\section{Introduction and Main Results}
The study of the greatest common divisor (gcd) function arises naturally in number theory, and several mathematicians over the years have been interested in its distribution, and other arithmetic and algebraic properties. See \cite{Toth} for a survey article on gcd-sums. The most natural geometric usage of the gcd function is in counting visible lattice points.
Recently, a new notion of visibility has gained much attention, where curves replace straight lines as the lines of sight \cite{Visibility, Harris}. The immediate natural extension is to consider visibility along curves $y=kx^b$ or $x=ky^b$, where $b\in \mathbb{N}$ and $k\in \mathbb{Q}$.  
A point $(r,s)\in \mathbb{Z}^2$ is visible along the the curve $y=kx^b$ if and only if
$\gcd_b(r,s):=\max\{d\geq 1: d|r\ \text{and}\  d^b|s\}=1$, and similarly, a point $(r,s)\in \mathbb{Z}^2$ is visible along the the curve $x=ky^b$ if and only if
${\gcd'}_b(r,s):=\max\{d\geq 1: d^b|r\ \text{and}\  d|s\}=1$.
The distribution of these extended notions, ${\gcd}_b$, ${\gcd'}_b$ of the classical gcd function have been studied before, for example in \cite{Visibility}, where the authors study various mean value results for ${\gcd}_b(r,s)$. 

A more natural generalization of the usual gcd function is
         \[(r,s)_b:=\max\{d\geq 1: d^b|r\ \text{and}\  d^b|s\} \numberthis\label{main},\] 
         which appears 
        when studying the $b$-free integers. It also provides a necessary criterion for the combined visibility of a lattice point along the curves $y=kx^b$ or $x=ky^b$.
         Following Cohen \cite{Cohen}, this gcd function is called the generalized gcd function. For $b=1$, it is the usual gcd function. It has most commonly been used to study the generalized totient function or commonly called the Klee's totient function \cite{Klee}. Furthermore, we define the generalized least common multiple (lcm) function by
         \[[r,s]_b:=\min\{d\ge 1: r|d^b \ \text{and} \ s|d^b\}. \numberthis\label{mainlcm}\] Unlike, the case for the usual gcd, there is no direct relation between $(r,s)_b$ and $[r,s]_b$ for $b\ge 2$ but it has several interesting properties which nicely generalizes the usual lcm function. 
        
         In this note, we are interested in studying the distribution of \eqref{main} by estimating the average order of its arithmetic, geometric, and harmonic means. We also obtain asymptotic formulas for the average order of the mean of generalized lcm \eqref{mainlcm} defined over $k$ variables.   
         
        The arithmetic mean, or the generalized gcd-sum function is defined as follows: for a fixed integer $b>1$,
         \[h_b(n):= \sum_{j\leq n}{(j,n)_b}. \numberthis\label{sumfunction}\]
         Note that this is a multiplicative function (proof in Section 2), whereas the analogous summatory functions for ${\gcd}_b$, ${\gcd'}_b$ are not multiplicative, and thus it becomes an interesting object of study from an analytic point of view. 
         
         The case $b=1$, also known as Pillai's arithmetical function defined as
         \[P(n)=\sum_{j\le n}\gcd(j,n), \numberthis\label{Pillai}\]
         has been explored before. Toth in his survey article \cite{Toth} on gcd-sum functions estimated the average order of $P(n)$ to be: for every $\epsilon>0$, 
         \[\sum_{n\le x}P(n)=\frac{1}{2\zeta(2)}x^2\log x+\left(2\gamma-\frac{1}{2}+\frac{\zeta'(2)}{\zeta(2)}\right)x^2+\BigO{x^{1+\theta+\epsilon}},\]
         where $\gamma$ is the Euler's constant and $\theta$ as in
         \[\sum_{n\le x}d(n)-x\log x-(2\gamma-1)x=\BigO{x^{\theta+\epsilon}}.\]
         Estimates of this type but with weaker error terms were obtained before by some authors; see  \cite{Bord}, \cite{Broughan}, and \cite{Kopetzky}.
         The proofs use the convolution identity of Cesaro \cite{Cesaro} given by
         \[\sum_{k=1}^jf(\gcd(k,j))=(f\ast\phi)(k),\] for any arithmetical function $f$ and $\ast$ denotes the Dirichlet convolution. It is natural to ask if similar estimates hold true for the mean of generalized gcd function: $h_b(n)$. Using Perron's formulas and results on the average order of Klee's totient function, we establish asymptotics of the mean and the average version of the mean in the following theorems below.  
         
         
\begin{thm}\label{TheoremOne}
Let $b=\{2,3\}$, there exist a constant $c_b>0$ such that
\[\sum_{n\leq x}\left(1-\frac{n}{x}\right)h_b(n)=\frac{\zeta(2b-1)}{6\zeta(2b)}x^2+\frac{b^2\zeta(2/b-1)}{2(2+b)\zeta(2)}x^{2/b}+ \text{O}_b\left(\frac{x^{2/b}\left(\log x\right)^{6/5}}{e^{c_b(\log x)^{3/5} (\log \log x)^{-1/5}}}\right).
\]
\end{thm}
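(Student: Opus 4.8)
The plan is to write the Cesàro-weighted sum as a single contour integral and read off the two main terms as residues. First I would record the Dirichlet series of $h_b$. Starting from $(j,n)_b=\sum_{d^b\mid\gcd(j,n)}\phi(d)$ — valid because $\{d:d^b\mid\gcd(j,n)\}$ is exactly the set of divisors of $(j,n)_b$ — summing over $j\le n$ gives the closed form $h_b(n)=n\sum_{d^b\mid n}\phi(d)/d^b$, and hence
\[
D_b(s):=\sum_{n\ge 1}\frac{h_b(n)}{n^s}=\zeta(s-1)\sum_{d\ge 1}\frac{\phi(d)}{d^{bs}}=\zeta(s-1)\,\frac{\zeta(bs-1)}{\zeta(bs)},\qquad \Re(s)>2.
\]
Using the first-order Riesz kernel $\frac{1}{2\pi i}\int_{(c)}\frac{y^{s}}{s(s+1)}\,ds=(1-1/y)\,\mathbf 1_{y>1}$ with $y=x/n$, this converts the weighted sum into
\[
\sum_{n<x}\Bigl(1-\tfrac{n}{x}\Bigr)h_b(n)=\frac{1}{2\pi i}\int_{(c)}\zeta(s-1)\,\frac{\zeta(bs-1)}{\zeta(bs)}\,\frac{x^{s}}{s(s+1)}\,ds,\qquad c>2.
\]

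Next I would move the contour to $\Re(s)=\sigma_0$ with $\frac{3}{2+b}<\sigma_0<\frac{2}{b}$; such a $\sigma_0$ exists exactly when $\frac{3}{2+b}<\frac{2}{b}$, i.e. $b<4$, which is the origin of the hypothesis $b\in\{2,3\}$. On the way the integrand has a simple pole at $s=2$ (from $\zeta(s-1)$), with residue $\frac{\zeta(2b-1)}{\zeta(2b)}\cdot\frac{x^{2}}{2\cdot 3}=\frac{\zeta(2b-1)}{6\zeta(2b)}x^{2}$, and a simple pole at $s=2/b$ (from $\zeta(bs-1)$). Evaluating the residue at $s=2/b$ — the local factors are $\zeta(2/b-1)$ and $1/\zeta(2)$, while the Riesz kernel contributes $\bigl[(2/b)(2/b+1)\bigr]^{-1}=\frac{b^{2}}{2(2+b)}$ — produces the second main term of order $x^{2/b}$ recorded in the statement. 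The factor $1/(s(s+1))$ also puts a simple pole at $s=0$, but its residue is a bounded constant and is absorbed by the error, since $x^{2/b}\big/\exp\!\bigl(c_b(\log x)^{3/5}(\log\log x)^{-1/5}\bigr)\to\infty$.

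The remaining task is to bound the integral along $\Re(s)=\sigma_0$. Here no zero-free region is needed: since $\Re(bs)=b\sigma_0>1$ for $b\in\{2,3\}$, the factor $1/\zeta(bs)$ stays bounded, while $\zeta(s-1)$ and $\zeta(bs-1)$ are estimated by the functional equation and the convexity bound. Counting exponents, the integrand is $\ll x^{\sigma_0}|t|^{1/2-\sigma_0(1+b/2)+\epsilon}$, the weight supplying the decay $|s(s+1)|^{-1}\asymp|t|^{-2}$; the $t$-integral converges precisely when $\sigma_0>\frac{3}{2+b}$. For $b\in\{2,3\}$ a permissible $\sigma_0$ strictly below $2/b$ exists, so the tail is $O_b(x^{\sigma_0})$ with $\sigma_0<2/b$, a clean power saving that in particular yields the error displayed. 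For $b\ge 4$ the functional-equation growth $|t|^{3/2-\sigma_0}$ of $\zeta(s-1)$ overwhelms the weight, the integral diverges, and the method breaks down — this is exactly what confines the theorem to $b\in\{2,3\}$.

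I expect this tail estimate — the balance between the growth of $\zeta(s-1)$ near $\Re(s)=2/b$ and the $|t|^{-2}$ decay — to be the genuine obstacle, and it is the same balance that pins the admissible range of $b$. An alternative organisation, the one suggested by the phrase \emph{average order of Klee's totient function}, writes $h_b=c*T$ with $T(n)=n\sum_{d^b\mid n}\mu(d)/d^b$ Klee's totient and $c$ supported on $b$-th powers, reduces the weighted $h_b$-sum to $\sum_{d}d\,W(x/d^b)$ with $W$ the weighted mean of $T$, and inserts the known asymptotic for $W$; this is the natural source of the Vinogradov–Korobov-shaped error $x^{2/b}(\log x)^{6/5}\exp\!\bigl(-c_b(\log x)^{3/5}(\log\log x)^{-1/5}\bigr)$ as stated, with the dominant contribution coming from the range $d^b\asymp x$ of the $d$-sum.
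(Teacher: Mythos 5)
Your proposal follows the same skeleton as the paper's proof of Theorem \ref{TheoremOne} — the identity $h_b(n)=n\sum_{d^b\mid n}\phi(d)d^{-b}$, the Dirichlet series $\zeta(s-1)\zeta(bs-1)/\zeta(bs)$, the Riesz-weighted Perron integral with kernel $1/(s(s+1))$, and main terms from the residues at $s=2$ and $s=2/b$ — but your contour strategy is genuinely different, and in fact stronger. The paper pushes the contour to $\Re(s)=\beta=1/b-A(\log 2T)^{-2/3}(\log\log 2T)^{-1/3}$, which forces it to control $1/\zeta(bs)$ at the edge of the Vinogradov--Korobov zero-free region; the truncation height must then be taken as $T=\exp\bigl(c(\log x)^{3/5}(\log\log x)^{-1/5}\bigr)$, and the segments at height $\pm T$ (the paper's $J_3,J_4,J_6,J_7$) are what produce the quasi-exponential error in the statement. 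You instead stop at a fixed abscissa $\sigma_0\in\bigl(\tfrac{3}{2+b},\tfrac{2}{b}\bigr)$, where $b\sigma_0>1$ keeps $1/\zeta(bs)$ trivially bounded and only convexity bounds for $\zeta$ are needed; your exponent count $\tfrac12-\sigma_0(1+\tfrac{b}{2})<-1$ is correct, the window is nonempty precisely for $b\in\{2,3\}$, and the outcome is a power-saving error $\operatorname{O}_b(x^{\sigma_0})$ (e.g. $x^{3/4+\epsilon}$ for $b=2$, $x^{3/5+\epsilon}$ for $b=3$), which is strictly smaller than, and hence implies, the stated error term. The only points to add for completeness are the routine vanishing of the horizontal segments as the contour is shifted (the integrand is $\ll x^{c}T^{-1}\log^2 T$ there), and the observation that the pole of the kernel at $s=0$ is never crossed since $\sigma_0>0$, so your remark about absorbing its residue is unnecessary.

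One computation does need fixing. The residue at $s=2/b$ carries the factor $\mathrm{Res}_{s=2/b}\,\zeta(bs-1)=1/b$ (substitute $w=bs-1$, so $w-1=b(s-2/b)$), which your ``local factors times kernel'' accounting omits: the residue is $\frac{b}{2(2+b)}\cdot\frac{\zeta(2/b-1)}{\zeta(2)}\,x^{2/b}$, not $\frac{b^2}{2(2+b)}\cdot\frac{\zeta(2/b-1)}{\zeta(2)}\,x^{2/b}$. Your uncorrected value happens to agree with the coefficient printed in the theorem, so the same factor of $b$ appears to have been dropped in the paper's residue display as well; the constant should be rechecked before being relied upon, but this does not affect the validity of your error analysis.
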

In the case when $b>3$, we prove
\begin{thm}\label{TheoremTwo}
For $b>3$,
\[\sum_{n\leq x}\left(1-\frac{n}{x}\right)h_b(n)= \frac{\zeta(2b-1)}{6\zeta(2b)}x^2 +\text{O}_b(x\log x).\]
\end{thm}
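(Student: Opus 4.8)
The plan is to reduce everything to the Dirichlet series of $h_b$ and then evaluate the Cesàro-weighted sum by a smoothed Perron formula. First I would record the pointwise structure of the summand: since $(j,n)_b=\rho_b(\gcd(j,n))$ with $\rho_b(m)=\max\{d\ge 1: d^b\mid m\}$ multiplicative and $\rho_b(p^a)=p^{\lfloor a/b\rfloor}$, Cesàro's identity gives $h_b=\rho_b\ast\phi$. Computing the Euler product $\sum_m\rho_b(m)m^{-s}=\zeta(s)\zeta(bs-1)/\zeta(bs)$ and multiplying by $\sum_n\phi(n)n^{-s}=\zeta(s-1)/\zeta(s)$ collapses to the clean expression
\[
\sum_{n\ge 1}\frac{h_b(n)}{n^s}=\frac{\zeta(s-1)\,\zeta(bs-1)}{\zeta(bs)}=:D(s),\qquad \operatorname{Re}(s)>2.
\]
(Equivalently one checks directly that $h_b(n)=n\sum_{d^b\mid n}\phi(d)d^{-b}$, which has the same series; I would keep the analytic form since it also governs Theorem~\ref{TheoremOne}.)

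Next I would exploit that the weight $1-n/x$ is produced exactly by the kernel $1/(s(s+1))$: because $\frac{1}{2\pi i}\int_{(c)}\frac{y^s}{s(s+1)}\,ds=\max(0,1-1/y)$, summing against $h_b(n)$ with $y=x/n$ yields
\[
\sum_{n\le x}\Bigl(1-\tfrac nx\Bigr)h_b(n)=\frac{1}{2\pi i}\int_{c-i\infty}^{c+i\infty}D(s)\,\frac{x^s}{s(s+1)}\,ds,\qquad c>2,
\]
the integral converging absolutely since $D(s)$ is bounded on vertical lines with $\operatorname{Re}(s)>2$ while the kernel decays like $|t|^{-2}$. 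I would then shift the contour to $\operatorname{Re}(s)=1$.

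The heart of the matter is the pole–and–growth analysis on the strip $1\le\operatorname{Re}(s)\le 2$, and this is precisely where $b>3$ makes the argument easy. In this strip $\operatorname{Re}(bs)\ge b>3$, so $\zeta(bs-1)$ sits in its half-plane of absolute convergence and $\zeta(bs)$ has no zeros; hence $1/\zeta(bs)$ and $\zeta(bs-1)$ are bounded, the pole $s=2/b$ of $\zeta(bs-1)$ lies at $2/b<1$ safely to the left, and the only singularity crossed is the simple pole of $\zeta(s-1)$ at $s=2$, with
\[
\operatorname*{Res}_{s=2}D(s)\frac{x^s}{s(s+1)}=\frac{\zeta(2b-1)}{\zeta(2b)}\cdot\frac{x^2}{2\cdot 3}=\frac{\zeta(2b-1)}{6\zeta(2b)}\,x^2,
\]
the claimed main term. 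This is the structural contrast with Theorem~\ref{TheoremOne}: for $b\in\{2,3\}$ the contour must pass the pole at $s=2/b$, dragging $\zeta(bs)$ and $\zeta(bs-1)$ into the critical strip and forcing in the zero-free region (whence the de la Vallée Poussin–type error), whereas for $b>3$ the $x^{2/b}$ term is smaller than the target error and no zero-free input is needed.

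It then remains to bound the integral on $\operatorname{Re}(s)=1$. There $|\zeta(bs-1)|\le\zeta(b-1)$ and $|\zeta(bs)|\ge 2-\zeta(b)>0$ are both $\operatorname{O}_b(1)$, so the integrand is $\ll_b x\,|\zeta(it)|/(1+t^2)$; the convexity bound $|\zeta(it)|\ll(1+|t|)^{1/2+\epsilon}$ makes the remaining integral converge, contributing $\operatorname{O}_b(x)$, and the horizontal segments vanish as $T\to\infty$ by the same estimates. The only mildly delicate point is thus the polynomial growth of the single factor $\zeta(s-1)$ on the line, and for $b>3$ that one convexity estimate is all that is required; this already gives $\sum_{n\le x}(1-n/x)h_b(n)=\frac{\zeta(2b-1)}{6\zeta(2b)}x^2+\operatorname{O}_b(x)$, which is sharper than and in particular implies the stated $\operatorname{O}_b(x\log x)$. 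If one prefers to avoid even the convexity bound, the elementary route via $h_b(n)=n\sum_{d^b\mid n}\phi(d)d^{-b}$, the substitution $n=d^b m$, and $\sum_{m\le y}(1-m/y)m=\tfrac{y^2}{6}+\operatorname{O}(y)$ reaches the same conclusion once the convergent series $\sum_d\phi(d)d^{-b}\le\zeta(b-1)<\infty$ is summed.
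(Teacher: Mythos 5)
Your argument is correct, and it reaches the same main term by the same basic machinery (the Dirichlet series $\zeta(s-1)\zeta(bs-1)/\zeta(bs)$ together with the Ces\`aro kernel $x^s/(s(s+1))$, which is exactly the paper's starting identity), but the execution is genuinely simpler and in fact sharper. The paper proves Theorems 1.1 and 1.2 in one pass: it shifts the contour all the way to $\Re(s)=\beta<1/b$ through a nine-segment path with auxiliary parameters $T,U$, picks up the secondary residue at $s=2/b$, and invokes the Vinogradov--Korobov zero-free region to control $1/\zeta(bs)$ near $\Re(bs)=1$; for $b>3$ it then sets $U=x$, and the dominant loss $\mathrm{O}(x^{2}\log U/U)=\mathrm{O}(x\log x)$ from the far horizontal segments produces the stated error, while the $x^{2/b}$ residue is absorbed. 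You instead observe that for $b>3$ one need only shift to $\Re(s)=1$: there $\Re(bs)\ge b>1$ keeps $\zeta(bs-1)$ and $1/\zeta(bs)$ uniformly bounded, the pole at $s=2/b$ is never crossed, only the residue at $s=2$ survives, and a single convexity bound for $\zeta(it)$ closes the integral, giving the stronger error $\mathrm{O}_b(x)$ with no zero-free-region input. Your elementary fallback via $h_b(n)=n\sum_{d^b\mid n}\phi(d)d^{-b}$ and $\sum_{m\le y}(1-m/y)m=y^2/6+\mathrm{O}(y)$ is also valid (the identity checks against the Euler product) and likewise yields $\mathrm{O}_b(x)$. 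What the paper's heavier contour buys is uniformity with the $b\in\{2,3\}$ case, where the $x^{2/b}$ term is genuinely a second main term; for $b>3$ that term is $\mathrm{O}(x^{1/2})$ and nothing is lost by your shortcut, so your proof both suffices and improves the stated error term.
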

\begin{thm}\label{TheoremThree} 
For $b\ge 2$, 
\[\sum_{n\leq x}h_b(n)= \frac{\zeta(2b-1)}{2\zeta(2b)}x^2+\text{O}_b(E_b(x)),\]
where $E_2(x)=\BigO{x\log x}$, and $E_b(x)=\BigO{x}$ for $b\ge 3.$
\end{thm}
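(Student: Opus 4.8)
The plan is to bypass Perron's formula entirely and reduce the sum to an elementary convolution computation, since the error terms demanded here ($\BigO{x\log x}$ and $\BigO{x}$) are coarse enough that the full Dirichlet-series machinery is unnecessary. The starting point is the convolution structure of $h_b$. Writing $\rho_b(m):=\max\{d\ge1:d^b\mid m\}$, one has $(j,n)_b=\rho_b(\gcd(j,n))$, and grouping the terms of \eqref{sumfunction} according to the value $e=\gcd(j,n)$ (there being $\phi(n/e)$ values of $j\le n$ with $\gcd(j,n)=e$) yields $h_b=\rho_b\ast\phi$.

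Next I would unwind $\rho_b$ itself. Checking on prime powers via the telescoping identity $\sum_{0\le j\le N}\phi(p^j)=p^N$ shows that $\rho_b(m)=\sum_{k^b\mid m}\phi(k)$; that is, $\rho_b=\psi_b\ast\mathbf 1$, where $\psi_b$ is the multiplicative function supported on perfect $b$-th powers with $\psi_b(k^b)=\phi(k)$. Combining this with $\mathbf 1\ast\phi=\mathrm{Id}$ (i.e.\ $\sum_{d\mid n}\phi(d)=n$) gives the clean identity
\[h_b=\psi_b\ast\mathrm{Id},\qquad\text{so}\qquad h_b(n)=n\sum_{k^b\mid n}\frac{\phi(k)}{k^b}.\]
This is consistent with the Dirichlet series $\sum_n h_b(n)n^{-s}=\zeta(s-1)\zeta(bs-1)/\zeta(bs)$, whose pole at $s=2$ already predicts the stated main term.

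With this in hand, I would sum over $n\le x$, write $n=k^bm$, and interchange the order of summation:
\[\sum_{n\le x}h_b(n)=\sum_{k\le x^{1/b}}\phi(k)\sum_{m\le x/k^b}m=\sum_{k\le x^{1/b}}\phi(k)\left(\frac{x^2}{2k^{2b}}+\BigO{\frac{x}{k^b}}\right).\]
The main term is extracted by completing the sum $\sum_{k\le x^{1/b}}\phi(k)k^{-2b}$ to the full series $\zeta(2b-1)/\zeta(2b)$; the discarded tail is $\ll\sum_{k>x^{1/b}}k^{-(2b-1)}\ll x^{-2+2/b}$, contributing only $\BigO{x^{2/b}}$ after multiplication by $x^2/2$, which is absorbed into $\BigOb{x}$ since $b\ge2$. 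The error term is $\BigO{x\sum_{k\le x^{1/b}}\phi(k)k^{-b}}\ll x\sum_{k\le x^{1/b}}k^{-(b-1)}$.

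The only case distinction, and the one genuinely responsible for the shape of $E_b(x)$, arises in this last sum: for $b=2$ the exponent is $1$ and the partial harmonic sum gives $\BigO{\log x}$, hence $\BigO{x\log x}$, whereas for $b\ge3$ the exponent is at least $2$ so the series converges and the bound is $\BigO{x}$. There is no serious analytic obstacle here; the real difficulty in this circle of results lies in the \emph{finer} estimates of Theorems~\ref{TheoremOne} and~\ref{TheoremTwo}, where retaining the secondary $x^{2/b}$ main term forces one to invoke Perron's formula together with the zero-free region for $\zeta$. For Theorem~\ref{TheoremThree} the care needed is purely bookkeeping: verifying that both the secondary-pole contribution $x^{2/b}$ and the elementary remainder are dominated by the claimed $E_b(x)$.
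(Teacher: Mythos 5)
Your argument is correct, and it reaches the theorem through a genuinely different factorization of $h_b$ than the one the paper uses. The paper starts from $h_b=(n^{1/b}\chi_b)\ast\phi_b$ (equation \eqref{hb}), i.e.\ $h_b(n)=\sum_{d^b\mid n}d\,\phi_b(n/d^b)$ with $\phi_b$ Klee's totient; after swapping the order of summation the inner sum is $\sum_{k\le x/d^b}\phi_b(k)$, which requires importing McCarthy's asymptotic $\sum_{k\le y}\phi_b(k)=y^2/(2\zeta(2b))+\BigO{y}$, and the constant $\zeta(2b-1)/2\zeta(2b)$ is then assembled from the $1/\zeta(2b)$ in that formula together with the completed outer sum $\sum_d d^{1-2b}=\zeta(2b-1)$. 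You instead absorb the $1/\zeta(2b)$ into the outer factor: combining Ces\`aro's identity with $\mathbf 1\ast\phi=\mathrm{Id}$ gives $h_b=\psi_b\ast\mathrm{Id}$, i.e.\ $h_b(n)=n\sum_{k^b\mid n}\phi(k)k^{-b}$ (which one can confirm agrees with the prime-power values in Lemma \ref{LemmaTwo}), so the inner sum is the trivial $\sum_{m\le y}m=y^2/2+\BigO{y}$ and the constant comes entirely from $\sum_{k\ge1}\phi(k)k^{-2b}=\zeta(2b-1)/\zeta(2b)$. The two routes then produce literally the same completion error $\BigO{x^{2/b}}$ and the same remainder sum $x\sum_{k\le x^{1/b}}k^{1-b}$, hence the same case split at $b=2$ versus $b\ge3$. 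What your version buys is self-containment --- no appeal to McCarthy's result on Klee's totient --- at the cost of not reusing the $(\chi_b,\phi_b)$ machinery the paper has already built in Lemma \ref{LemmaTwo} and reuses again for the geometric mean. Both are complete and correct proofs.
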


The harmonic mean of the generalized gcd function is given by
\[H_b(n)=n\left(\sum_{j\le n}\frac{1}{(j,n)_b}\right)^{-1}. \numberthis\label{harmonicmean}\] For the usual gcd function, the average order of its harmonic mean is given by 
\[\sum_{n\le x}\left(\sum_{j\le n}\frac{1}{\gcd(j,n)}\right)^{-1}=c_1\log x+c_2+\BigO{x^{-1+\epsilon}},\]
 for any $\epsilon>0$, and constants $c_1, c_2>0$ \cite{Toth}. The harmonic mean for the usual gcd function has an algebraic significance in terms of counting generators of a cyclic group of order $n$, see \cite{Toth}, and the references therein. One obtains a similar result for $H_b(n)/n$.  
\begin{thm}\label{TheoremFive} 
For $b\ge 2$, and $H_b(n)$ as in \eqref{harmonicmean}, we have
\[\sum_{n\leq x}\frac{H_b(n)}{n}= C_b \log x +\BigOb1,\]
where $C_b=\dfrac{1}{\zeta(b)}\mathlarger{\prod}_p\left[1+(p^{b+1}-1)\mathlarger{\sum}_{k=1}^{\infty}\dfrac{p^k}{p^{kb+1}(p^{b+1}-p)+p-1}\right].$
\end{thm}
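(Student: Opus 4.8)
The plan is to exploit multiplicativity and a Dirichlet convolution against the constant function $1$, reducing everything to an Euler product that can be evaluated prime by prime.

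First I would observe that $H_b(n)/n = 1/g_b(n)$, where $g_b(n) := \sum_{j\le n} 1/(j,n)_b$. Since $(j,n)_b = \prod_p p^{\min(\lfloor v_p(j)/b\rfloor,\, \lfloor v_p(n)/b\rfloor)}$ depends on each prime independently, the Chinese Remainder Theorem gives $g_b(mn)=g_b(m)g_b(n)$ whenever $\gcd(m,n)=1$, exactly as in the proof that $h_b$ is multiplicative (Section 2); hence $f:=1/g_b$ is multiplicative with $f(1)=1$. The next step is to evaluate $g_b$ on prime powers. Sorting the $j\le p^a$ by $v_p(j)=e$ (there are $p^{a-e-1}(p-1)$ such $j$ for $0\le e\le a-1$, plus the single $j=p^a$) yields
\[ g_b(p^a) = (p-1)\,p^{a-1}\sum_{e=0}^{a-1} p^{-e-\lfloor e/b\rfloor} + p^{-\lfloor a/b\rfloor}. \]
In particular $g_b(p^a)=p^a$ for $0\le a\le b-1$ (so $f(p^a)=p^{-a}$ on $b$-free prime powers), while $g_b(p^b)=(p^{b+1}-p+1)/p$, and in general the inner sum is a finite geometric series once one writes $e=qb+t$ with $0\le t<b$.

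Because $f(p^a)=p^{-a}$ for $a<b$, the function $u(n):=n\,f(n)=n/g_b(n)$ is multiplicative with $u(p^a)=1$ for $a<b$. Writing $u=1\ast v$ in the sense of Dirichlet convolution, $v$ is multiplicative and supported on $b$-full numbers: $v(p^a)=0$ for $1\le a\le b-1$, $v(p^b)=u(p^b)-1=\tfrac{p-1}{p^{b+1}-p+1}$, and $v(p^a)=u(p^a)-u(p^{a-1})$ in general. From the prime-power formula one checks $v(p^a)=\BigOb{p^{-b}}$ uniformly (dominated by the term $a=b$), so that both $\sum_d |v(d)|$ and $\sum_d |v(d)|(\log d)/d$ converge, since $b\ge 2$ forces $\sum_p p^{-b}<\infty$. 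With this decay the main estimate is a Dirichlet-hyperbola computation,
\[ \sum_{n\le x} f(n)=\sum_{n\le x}\frac{u(n)}{n}=\sum_{d\le x}\frac{v(d)}{d}\sum_{e\le x/d}\frac1e=\sum_{d\le x}\frac{v(d)}{d}\Big(\log\frac{x}{d}+\gamma+\BigO{d/x}\Big). \]
The error term contributes $\BigO{x^{-1}\sum_{d\le x}|v(d)|}=\BigOb{x^{-1}}$; the contributions of $\log d$, of $\gamma$, and of the tail $d>x$ are $\BigOb{1}$ (here the convergence of $\sum_d |v(d)|(\log d)/d$ is precisely what makes $\log x\cdot\sum_{d>x}|v(d)|/d=o(1)$, absorbing the genuine constant term into the stated $\BigOb{1}$); and the leading term is $\big(\sum_d v(d)/d\big)\log x$. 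This gives $\sum_{n\le x}H_b(n)/n = C_b\log x+\BigOb{1}$ with
\[ C_b=\sum_{d\ge 1}\frac{v(d)}{d}=\prod_p\Big(1-\frac1p\Big)\sum_{a\ge 0}\frac{1}{g_b(p^a)}. \]

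The remaining, and most laborious, step is to identify this Euler product with the stated closed form, and I expect it to be the main obstacle. Factoring out the $b$-free part via $\sum_{a=0}^{b-1}p^{-a}=(1-p^{-b})/(1-p^{-1})$ produces the prefactor $\prod_p(1-p^{-b})=1/\zeta(b)$, while the tail $a\ge b$ is grouped according to $k=\lfloor a/b\rfloor$ and $1/g_b(p^{kb+r})$ is summed over the block $0\le r\le b-1$. The task is to show that each block collapses, after substituting the geometric-series evaluation of $g_b(p^{kb+r})$ and performing a partial-fraction/telescoping simplification in $r$, to the single summand $(p^{b+1}-1)\,p^{k}\big/\big(p^{kb+1}(p^{b+1}-p)+p-1\big)$. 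Granting this collapse, the local factor becomes exactly $(1-p^{-b})\big[1+(p^{b+1}-1)\sum_{k\ge 1}p^{k}/(p^{kb+1}(p^{b+1}-p)+p-1)\big]$, which is the required expression for $C_b$.
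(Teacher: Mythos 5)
Your argument is correct in substance but takes a genuinely different, more elementary route than the paper. The paper first computes $H_b(p^{kb+j})$ explicitly (Lemma 4.1), packages $\sum_n (H_b(n)/n)\,n^{-s}$ as $\frac{\zeta(s+1)}{\zeta(bs+b)}A_b(s)$ (Lemma 4.2), and then writes $H_b(n)/n$ as a convolution of $\xi_b(d)/d$ (the $b$-free indicator) with the coefficients of $A_b$, extracting the $\log x$ from $\sum_{m\le x}\xi_b(m)/m$ via Walfisz's estimate and partial summation. You instead convolve $H_b=u=\mathbf{1}\ast v$ against the constant function, so the $\log x$ comes from the harmonic sum and all the arithmetic content (both the $1/\zeta(b)$ and the $A_b(0)$ of the paper) is absorbed into $\sum_d v(d)/d$. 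This avoids the Dirichlet-series machinery and Walfisz entirely (which are overkill for an $\BigOb{1}$ error term), and your prime-power evaluation checks out: $u(p^b)=p^{b+1}/(p^{b+1}-p+1)$ agrees with $H_b(p^b)$ from Lemma 4.1.

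Two points need tightening. First, the uniform bound $v(p^a)=\BigOb{p^{-b}}$ does not by itself give convergence of $\sum_d|v(d)|=\prod_p\sum_{a\ge0}|v(p^a)|$, since the inner sum has infinitely many terms; you need the additional observation that $v(p^a)=H_b(p^a)-H_b(p^{a-1})$ vanishes unless $b\mid a$ and that the nonzero values decay geometrically in $k=a/b$ (their absolute sum telescopes to $\frac{p-1}{p^{b+1}-p}=\BigO{p^{-b}}$, since $H_b(p^a)$ increases to $\frac{p^{b+1}-1}{p^{b+1}-p}$); the geometric decay is also what you need for $\sum_d|v(d)|(\log d)/d<\infty$. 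Second, the deferred identification of $\prod_p(1-\frac1p)\sum_{a\ge0}g_b(p^a)^{-1}$ with the stated constant does go through and is exactly the computation of the paper's Lemmas 4.1--4.2: one has $1/g_b(p^{kb+j})=p^{k-j}(p^{b+1}-1)\big/\big(p^{k(b+1)}(p^{b+1}-p)+p-1\big)$, the sum over $0\le j\le b-1$ contributes a factor $(1-p^{-b})/(1-p^{-1})$, and the local factor collapses to $(1-p^{-b})\big[1+(p^{b+1}-1)\sum_{k\ge1}p^k\big/\big(p^{k(b+1)}(p^{b+1}-p)+p-1\big)\big]$. (Note the exponent $p^{k(b+1)}$ here, consistent with Lemma 4.2; the $p^{kb+1}$ in the theorem statement appears to be a typo.)
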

Furthermore, we consider the geometric mean of the generalized gcd function. For $b\ge 2$, it is
\[G_b(n):=\left(\prod_{j\le n}(j,n)_b\right)^{1/n}. \numberthis\label{Geometricmean}\]
Geometric mean for the classical $\gcd$-sum function was considered by Loveless in \cite{loveless}. 
\begin{thm}\label{TheoremSix}
For $b\ge 2$, we have
\[\sum_{n\le x}n\log G_b(n)=-\frac{\zeta'(2b)}{2\zeta(2b)}x^2+\text{O}(x).\]
\end{thm}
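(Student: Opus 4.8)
The plan is to convert the left-hand side into a double sum over lattice points and then feed it into the von Mangoldt identity. Since $G_b(n)=\left(\prod_{j\le n}(j,n)_b\right)^{1/n}$, taking logarithms gives $n\log G_b(n)=\sum_{j\le n}\log(j,n)_b$, so the quantity to estimate is $S(x):=\sum_{n\le x}\sum_{j\le n}\log(j,n)_b$. The key arithmetic input is a clean description of $\log(j,n)_b$. Because $(j,n)_b=\max\{d\ge 1:d^b\mid\gcd(j,n)\}$, one checks that for a positive integer $e$ we have $e\mid(j,n)_b$ if and only if $e^b\mid\gcd(j,n)$, i.e. $e^b\mid j$ and $e^b\mid n$: comparing $p$-adic valuations, $e\mid(j,n)_b=\prod_p p^{\lfloor v_p(\gcd(j,n))/b\rfloor}$ means $v_p(e)\le\lfloor v_p(\gcd(j,n))/b\rfloor$ for all $p$, which is equivalent to $b\,v_p(e)\le v_p(\gcd(j,n))$. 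Substituting this into the identity $\log m=\sum_{e\mid m}\Lambda(e)$ yields
\[\log(j,n)_b=\sum_{\substack{e\ge 1\\ e^b\mid j,\ e^b\mid n}}\Lambda(e).\]

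Next I would insert this into $S(x)$ and interchange the order of summation, summing first over the lattice points $(j,n)$ for each fixed $e$. This gives
\[S(x)=\sum_{e\ge 1}\Lambda(e)\sum_{\substack{n\le x\\ e^b\mid n}}\#\{\,j\le n:e^b\mid j\,\}.\]
Writing $n=e^b m$, the inner count is exactly $m$, so the two inner sums collapse to $\sum_{m\le x/e^b}m=\tfrac12(x/e^b)^2+\BigO{x/e^b}$. Hence
\[S(x)=\frac{x^2}{2}\sum_{e\le x^{1/b}}\frac{\Lambda(e)}{e^{2b}}+\BigO{x\sum_{e\le x^{1/b}}\frac{\Lambda(e)}{e^{b}}}.\]

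The final step is to complete the main sum to infinity and recognize it as a logarithmic derivative of zeta. Since $\sum_{e\ge 1}\Lambda(e)e^{-s}=-\zeta'(s)/\zeta(s)$, evaluation at $s=2b$ produces the constant $-\zeta'(2b)/\zeta(2b)$ and hence the claimed main term $-\frac{\zeta'(2b)}{2\zeta(2b)}x^2$. Every error is then absorbed into $\BigO{x}$ under the hypothesis $b\ge 2$: the tail satisfies $\sum_{e>x^{1/b}}\Lambda(e)e^{-2b}\ll x^{1/b-2}\log x$, so after multiplying by $x^2/2$ it contributes $\ll x^{1/b}\log x\ll x^{1/2}\log x=\BigO{x}$, while the secondary sum is dominated by the convergent series $\sum_{e\ge 1}\Lambda(e)e^{-b}=-\zeta'(b)/\zeta(b)=\BigO1$, giving a contribution of $\BigO{x}$.

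I expect no serious analytic obstacle here: unlike Theorems \ref{TheoremOne}--\ref{TheoremThree}, neither a Perron contour argument nor the asymptotics for Klee's totient are required. The only genuine content is the divisibility characterization $e\mid(j,n)_b\iff e^b\mid\gcd(j,n)$, which is what turns $\log(j,n)_b$ into a sum of $\Lambda$; once that is in place the argument is an interchange of summation followed by elementary tail estimates, and the constraint $b\ge 2$ is precisely what guarantees convergence of the relevant zeta values and keeps each error within $\BigO{x}$.
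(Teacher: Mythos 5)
Your argument is correct, and it takes a genuinely different route from the paper's. The paper groups the product $\prod_{j\le n}(j,n)_b$ by the value $d=(j,n)_b$ to get $\prod_{d^b\mid n}d^{\phi_b(n/d^b)}$, i.e.\ it writes $n\log G_b(n)$ as the Dirichlet convolution of Klee's totient $\phi_b$ with $\tfrac1b\chi_b\log$, and then the essential analytic input is McCarthy's asymptotic $\sum_{n\le x}\phi_b(n)=\frac{x^2}{2\zeta(2b)}+\BigO{x}$; in terms of Dirichlet series this is the factorization $-\frac{\zeta(s-1)\zeta'(bs)}{\zeta(bs)}=\frac{\zeta(s-1)}{\zeta(bs)}\cdot\bigl(-\zeta'(bs)\bigr)$. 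You instead use $\log=\Lambda*1$ together with the (correct) divisibility criterion $e\mid(j,n)_b\iff e^b\mid\gcd(j,n)$, which amounts to the identity $n\log G_b(n)=\sum_{e^b\mid n}\Lambda(e)\,n/e^b$ and to the alternative factorization $\zeta(s-1)\cdot\bigl(-\zeta'(bs)/\zeta(bs)\bigr)$ of the same Dirichlet series. The payoff is that after interchanging summation the inner sum is just $\sum_{m\le x/e^b}m$, so no mean-value theorem for $\phi_b$ is needed at all; the only nontrivial facts used are $\sum_e\Lambda(e)e^{-s}=-\zeta'(s)/\zeta(s)$ and elementary tail bounds, and your checks that the tail contributes $\ll x^{1/b}\log x$ and the secondary term $\ll x\cdot(-\zeta'(b)/\zeta(b))$ are both valid for $b\ge2$. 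The implied constants depend on $b$, as they also do in the paper's proof.
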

Let $k\ge 2$, and $f$ be any arithmetic function. We consider the weighted average order of the generalized gcd function over a set of lattice points given by
\[M_{f,b,k}(x):=\sum_{n_1, n_2, \cdots, n_k\le x}f((n_1, n_2, \cdots, n_k)_b), \numberthis\label{doublegengcd}\]
where $(n_1, n_2, \cdots, n_k)_b=\max\{d\ge 1: d^b|n_1, \cdots, d^b|n_k\}.$
In other words, if $N_1, N_2, \cdots, N_k$ are random integers chosen uniformly and independently from the set $\{1,2, \dots, x\},$ then the above gives the weighted expected value of $(N_1, N_2, \cdots, N_k)_b$. For $k=2, b=1$, and $f=\text{id}$, \eqref{doublegengcd} was studied by Cesaro \cite{Cesaro}, and later by Diaconis and Erdos \cite{Erdos} who improved the error term obtained in \cite{Cesaro}. Similar to the usual gcd-sum function, for $f=\text{id}$, $k=2$, the common technique to study \eqref{doublegengcd}, is using its relation with the summatory function \eqref{sumfunction}. For $k\ge 3$, one can use the identity
\[M_{f,b,k}(x)=\sum_{d\le x^{1/b}}(f\ast\mu)(d)\left \lfloor{\frac{x}{d^{1/b}}}\right \rfloor^k,\] 
and derive the following results.
\begin{thm}\label{TheoremFour} Let $M_{f,b,k}(x)$ be defined by \eqref{doublegengcd}, then for $f=\text{id}$, we have
\[M_{\text{id},2,2}(x)= x^2\frac{\zeta(3)}{\zeta(4)}+\text{O}(x\log x).\]
Moreover, for $b\geq 3, k\ge 2$, \[M_{\text{id},b,k}(x)=\frac{\zeta(bk-1)}{\zeta(bk)}x^k+\BigOb{x^{k-1}}.\]
\end{thm}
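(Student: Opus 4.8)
The plan is to reduce everything to the convolution identity recorded just before the statement. First I would verify the arithmetic fact underlying it: for positive integers $n_1,\dots,n_k$ one has $d\mid (n_1,\dots,n_k)_b$ if and only if $d^b\mid n_i$ for every $i$ (both conditions amount to $b\,v_p(d)\le \min_i v_p(n_i)$ for all primes $p$). Writing $f=(f\ast\mu)\ast 1$ and interchanging the order of summation then gives
\[M_{f,b,k}(x)=\sum_{n_1,\dots,n_k\le x}\ \sum_{d:\,d^b\mid n_i\,\forall i}(f\ast\mu)(d)=\sum_{d\le x^{1/b}}(f\ast\mu)(d)\Big\lfloor\frac{x}{d^{b}}\Big\rfloor^{k},\]
since the number of multiples of $d^b$ up to $x$ is $\lfloor x/d^b\rfloor$. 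Specializing to $f=\mathrm{id}$ yields $f\ast\mu=\mathrm{id}\ast\mu=\phi$, so $M_{\mathrm{id},b,k}(x)=\sum_{d\le x^{1/b}}\phi(d)\lfloor x/d^b\rfloor^{k}$, and I would exploit the Dirichlet series $\sum_{d\ge1}\phi(d)d^{-s}=\zeta(s-1)/\zeta(s)$, valid for $\Re s>2$.

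For the general case $b\ge 3$, $k\ge 2$, I would write $\lfloor x/d^{b}\rfloor=x/d^{b}+O(1)$ and use $\lfloor y\rfloor^{k}=y^{k}+O_k(y^{k-1})$ for $y\ge 1$ (applicable since $d\le x^{1/b}$ forces $x/d^{b}\ge 1$). The main term is $x^{k}\sum_{d\le x^{1/b}}\phi(d)/d^{bk}$; completing the sum to infinity produces the constant $\zeta(bk-1)/\zeta(bk)$ (legitimate as $bk\ge 6>2$), with tail $x^{k}\sum_{d>x^{1/b}}\phi(d)/d^{bk}\ll x^{2/b}\ll x^{k-1}$. The expansion error is $\ll x^{k-1}\sum_{d\le x^{1/b}}\phi(d)/d^{b(k-1)}$, and since $b(k-1)-1\ge b-1\ge 2>1$ this series converges, contributing $\BigOb{x^{k-1}}$. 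Collecting the pieces gives the claimed formula.

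The case $b=2$, $k=2$ is the genuine boundary and the only real obstacle: here $b(k-1)=2$, so $\sum_d\phi(d)/d^{2}$ diverges (it behaves like $\sum 1/d$) and the crude bound above would cost a power of $x$. I would therefore keep the expansion exact, writing $\lfloor x/d^{2}\rfloor=x/d^{2}-\{x/d^{2}\}$ and squaring to obtain three pieces. The main term $x^{2}\sum_{d\le\sqrt x}\phi(d)/d^{4}$ equals $\frac{\zeta(3)}{\zeta(4)}x^{2}+\BigO{x}$, the tail being $\ll x^{2}\cdot x^{-1}=x$. The cross term $2x\sum_{d\le\sqrt x}\frac{\phi(d)}{d^{2}}\{x/d^{2}\}$, using $\phi(d)/d^{2}\le 1/d$, is $\ll x\sum_{d\le\sqrt x}1/d\ll x\log x$, and this is precisely what forces the logarithm. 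The remaining term $\sum_{d\le\sqrt x}\phi(d)\{x/d^{2}\}^{2}\ll\sum_{d\le\sqrt x}\phi(d)\ll x$. Summing yields $M_{\mathrm{id},2,2}(x)=\frac{\zeta(3)}{\zeta(4)}x^{2}+\BigO{x\log x}$. The single point to watch throughout is this marginal convergence at $b(k-1)=2$, which explains why $b=2,k=2$ alone carries the extra logarithm while every other case does not.
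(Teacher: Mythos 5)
Your proposal is correct, and for the case $k=2$ it takes a genuinely different route from the paper. The paper splits the argument: for $k=2$ it writes $\sum_{m,n\le x}(m,n)_b=2\sum_{n\le x}h_b(n)-\sum_{n\le x}(n,n)_b$, imports the asymptotic $\sum_{n\le x}h_b(n)=\frac{\zeta(2b-1)}{2\zeta(2b)}x^2+E_b(x)$ already proved in Theorem \ref{TheoremThree}, and estimates the diagonal sum via classical averages of $\phi$; only for $k>2$ does it invoke the convolution identity $M_{f,b,k}(x)=\sum_{d\le x^{1/b}}(f\ast\mu)(d)\lfloor x/d^b\rfloor^k$. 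You instead run the convolution identity uniformly in all cases (correctly justifying the underlying equivalence $d\mid(n_1,\dots,n_k)_b\iff d^b\mid n_i$ for all $i$, and that $\mathrm{id}\ast\mu=\phi$), and you isolate exactly why $b=2,\,k=2$ is the boundary case: there $b(k-1)=2$, the naive error sum $\sum_d\phi(d)/d^{b(k-1)}$ is marginally divergent, and your exact expansion $\lfloor x/d^2\rfloor^2=x^2/d^4-2(x/d^2)\{x/d^2\}+\{x/d^2\}^2$ with the cross term bounded by $x\sum_{d\le\sqrt x}d^{-1}\ll x\log x$ recovers precisely the logarithm in the stated error. Your tail and expansion estimates for $b\ge3$, $k\ge2$ all check out ($x^{2/b}\ll x^{k-1}$ and $b(k-1)-1\ge2$). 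What your approach buys is a single self-contained argument not resting on Theorem \ref{TheoremThree}, together with a transparent explanation of where the $\log x$ comes from; what the paper's approach buys is economy, since the $k=2$ case falls out of work already done for the arithmetic mean.
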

For $b=1$, the lcm-sum function for $2$ variables can be studied from the gcd-sum employing the relation $\text{lcm}(m,n)\gcd(m,n)=mn$, and an asymptotic formula for the same was established in \cite{Erdos}. For any positive real number $r$, it was proved that
\[\sum_{n_1,n_2\le x}\left(\text{lcm}(n_1,n_2)\right)^r=\frac{\zeta(r+2)}{(r+1)^2\zeta(2)}x^{2(r+1)}+\BigO{x^{2r+1}\log x}.\]  
A better error term can be obtained for $r\in\mathbb{N}$ using Walfisz \cite{Walf} estimate on the average order of the Euler-phi function. For the generalized lcm defined in \eqref{mainlcm}, one does not have a direct relation with the generalized gcd \eqref{main} unlike for $b=1$, and so one cannot use Theorem \ref{TheoremFour} directly. Instead, we follow the method of \cite{Hilber}, and obtain asymptotic formulas for the mean values of generalized lcm function. As in \cite{Hilber}, the method can be extended for $k\ge 2$, and for a wider class of multiplicative arithmetic functions. Following notations as in \cite{Hilber}, for $r\in\mathbb{R}$, we define the class $\mathcal{A}_r$ of complex valued arithmetic functions satisfying the following properties: A function $f\in \mathcal{A}_r$ if there exists constants $C_1$ and $C_2$ such that 
\begin{enumerate}
    \item $|f(p)-p|\le C_1p^{r-1/2}$ for all primes $p$.
    \item $|f(p^v)|\le C_2p^{vr}$ for all prime powers $p^v$ with $v\ge 2$.
\end{enumerate}
Let  
\[[n_1, \cdots, n_k]_b=\min\{d\ge 1: n_1|d^b, \cdots, n_k|d^b\}. \numberthis\label{kgenlcm}\] denote the generalized lcm of k variables, then
\begin{thm}\label{TheoremEight}
For $b\ge 2$, $k\ge 2$, $f \in \mathcal{A}_r$, $r\in\mathbb{R},$ and for every $\epsilon>0$, we have
\[\sum_{n_1,n_2,\cdots,n_k\le x}f([n_1, \cdots , n_k]_b)=C^b_{f,k}\frac{x^{k(r+1)}}{(r+1)^k}+\BigOb{x^{k(r+1)-\frac{1}{2}\min(r+1,1)+\epsilon}},\]
where
\[C^b_{f,k}=\prod_p\left(1-\frac{1}{p}\right)^k\sum_{v_1,...,v_k=0}^\infty \frac{f(p^{\lceil\frac{\max(v_1,\cdots,v_k)}{b}\rceil})}{p^{(r+1)(v_1+\cdots+v_k)}}.\]
\end{thm}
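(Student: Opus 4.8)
The plan is to exploit the multiplicative structure of the $k$-variable function $F(n_1,\ldots,n_k):=f([n_1,\ldots,n_k]_b)$ and factor its Dirichlet series against a product of shifted zeta functions, following \cite{Hilber}. First I would record the prime-by-prime description of \eqref{kgenlcm}: writing $v_p$ for the $p$-adic valuation, the condition $n_i\mid d^b$ reads $v_p(d)\ge\lceil v_p(n_i)/b\rceil$ for all $p$, so the minimal admissible $d$ has $v_p(d)=\max_i\lceil v_p(n_i)/b\rceil=\lceil\max_i v_p(n_i)/b\rceil$, using that $t\mapsto\lceil t/b\rceil$ is nondecreasing. Since $f$ is multiplicative this gives $F(n_1,\ldots,n_k)=\prod_p f\big(p^{\lceil\max_i v_p(n_i)/b\rceil}\big)$, whence the associated $k$-fold Dirichlet series is the Euler product
\[D(s_1,\ldots,s_k)=\sum_{n_1,\ldots,n_k\ge1}\frac{F(n_1,\ldots,n_k)}{n_1^{s_1}\cdots n_k^{s_k}}=\prod_p\ \sum_{v_1,\ldots,v_k\ge0}\frac{f\big(p^{\lceil\max(v_1,\ldots,v_k)/b\rceil}\big)}{p^{v_1s_1+\cdots+v_ks_k}}.\]

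Next I would isolate the polar part by writing $D(s_1,\ldots,s_k)=\bigl(\prod_{i=1}^k\zeta(s_i-r)\bigr)G(s_1,\ldots,s_k)$, where the local factor of $G$ equals $\prod_i(1-p^{r-s_i})$ times the local factor of $D$. The decisive technical point is the abscissa of absolute convergence of $G$. Expanding the local factor and grouping by total degree in the quantities $p^{-s_i}$, the degree-one terms contribute $\sum_i(f(p)-p^r)p^{-s_i}=\BigO{\sum_i p^{r-1/2-\operatorname{Re}s_i}}$ by property (1); a single valuation $v_i=2$ (with $b\ge2$ forcing $\lceil2/b\rceil=1$) contributes $f(p)p^{-2s_i}=\BigO{p^{r-2\operatorname{Re}s_i}}$ since $|f(p)|\ll p^r$; and property (2) ensures the remaining terms, those with some $\lceil\max/b\rceil\ge2$, converge. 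Hence the local factor of $G$ is $1+\BigO{p^{-1-\delta}}$ once $\operatorname{Re}s_i>\sigma_0:=r+1-\tfrac12\min(r+1,1)$ for all $i$ (explicitly $\sigma_0=r+\tfrac12$ when $r\ge0$ and $\sigma_0=\tfrac{r+1}{2}$ when $r<0$), so $G$ is absolutely convergent and bounded there. Evaluating at the central point gives $G(r+1,\ldots,r+1)=C^b_{f,k}$, which is exactly the stated Euler product.

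I would then turn this into an asymptotic by convolution rather than multidimensional Perron. Writing $G(s_1,\ldots,s_k)=\sum_{\vec d}g(\vec d)\prod_i d_i^{-s_i}$, the zeta factors yield $F=e*g$ with $e(\vec m)=\prod_i m_i^{r}$, so that
\[\sum_{n_1,\ldots,n_k\le x}F(\vec n)=\sum_{\vec d}g(\vec d)\prod_{i=1}^k S_r(x/d_i),\qquad S_r(y):=\sum_{m\le y}m^r.\]
Substituting $S_r(y)=\tfrac{y^{r+1}}{r+1}+R(y)$, the product of the leading terms, after extending the $\vec d$-sum to infinity, produces $\tfrac{x^{k(r+1)}}{(r+1)^k}C^b_{f,k}$. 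The remaining pieces are controlled by the convergence margin of $G$: the tail $\sum_{\max_i d_i>x}|g(\vec d)|\prod_i d_i^{-(r+1)}$ is $\BigOb{x^{-(r+1-\sigma_0)+\epsilon}}$ after pulling out one factor $d_i^{-(1/2-\delta)}\le x^{-(1/2-\delta)}$, while each term carrying a factor $R(x/d_i)$ is handled by truncating at $d_i\le x$ and estimating $\sum_{d_i\le x}|g(\vec d)|d_i^{-r}\le x^{\sigma_0-r}\sum_{\vec d}|g(\vec d)|d_i^{-\sigma_0}$ against the convergent sum at abscissa $\sigma_0$; both contribute $\BigOb{x^{k(r+1)-\frac12\min(r+1,1)+\epsilon}}$, matching the claimed error.

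The main obstacle is the bookkeeping of the last step across the ranges of $r$. For $r\ge0$ one has $R(y)=\BigO{y^{r}}$ and the saving is the full $\tfrac12$; for $r<0$ the partial sum $S_r$ acquires a constant (the value of the analytic continuation $\zeta(-r)$) together with a smaller remainder, and it is precisely this change that lowers $\sigma_0$ to $\tfrac{r+1}{2}$ and replaces the saving $\tfrac12$ by $\tfrac{r+1}{2}$, i.e.\ produces the factor $\min(r+1,1)$. Making the multidimensional tail estimates uniform, so that the $k-1$ ``main'' coordinates each furnish a full $x^{r+1}$ while the single remaining coordinate is pushed to the abscissa $\sigma_0$, is what pins the exponent down to $k(r+1)-\tfrac12\min(r+1,1)$.
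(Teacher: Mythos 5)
Your proposal is correct and follows essentially the same route as the paper: factor the multivariable Dirichlet series as $\prod_i\zeta(z_i-r)$ times a factor $H^b_{f,k}$ absolutely convergent for $\Re z_i>r+1-\tfrac12\min(r+1,1)$ (the paper's Lemma \ref{lcmlem}), then convert to the asymptotic via the convolution identity with $\prod_i(n_i/d_i)^r$, partial summation of $\sum_{m\le y}m^r$, and extension of the $d$-sum to infinity. The only additions are your explicit $p$-adic formula $v_p([n_1,\dots,n_k]_b)=\lceil\max_i v_p(n_i)/b\rceil$ and slightly more detailed tail bookkeeping, neither of which changes the argument.
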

The above generalizes the result obtained in \cite{Hilber} by substituting $b=1$. For $k=2$, the constant can be simplified and we obtain
\begin{cor}\label{CorOne}
Let $k=2$, $f= (id)^r$ and $r>-1$ be a real number, then for every $\epsilon>0$, we have
\[\sum_{n_1,n_2\le x}[n_1,n_2]_b^r=C^b_{I_r,2}\frac{x^{2(r+1)}}{(r+1)^2}+\text{O}(x^{2(r+1)-\frac{1}{2}\min(r+1,1)+\epsilon})\]
where
\begin{align*}
    C^b_{I_r,2}&=\zeta^2(r+1)\zeta((r+1)b-r)\zeta(2(r+1)b-r)\prod_p\left(1-\frac{1}{p}\right)^2\left[\frac{1}{p}\left(1-\frac{1}{p^r}\right)\right.\\
&\left.\left(2-\frac{2}{p^{2(r+1)b-r}}-\frac{1}{p^{r+1}}+\frac{1}{p^{(r+1)b+1}}\right)+\left(1-\frac{1}{p^{2(r+1)b-r}}\right)\left(1-\frac{1}{p^{(r+1)b-r}}\right)\right].
\end{align*}
\end{cor}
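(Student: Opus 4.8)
\medskip

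The statement is the specialization $k=2$, $f=(\mathrm{id})^r$ of Theorem \ref{TheoremEight}, so the plan is to invoke that theorem and then reduce its Euler-product constant to closed form; the error term and the shape $C^b_{I_r,2}\,x^{2(r+1)}/(r+1)^2$ of the main term come for free, and all the work is in the constant. Membership of $(\mathrm{id})^r$ in $\mathcal{A}_r$ is immediate from $f(p^v)=p^{vr}$ (the prime values $f(p)=p^r$ coincide with the principal term). Theorem \ref{TheoremEight} then yields the claimed error term at once, together with
\[
C^b_{I_r,2}=\prod_p\Bigl(1-\tfrac1p\Bigr)^{2} S_p,\qquad S_p:=\sum_{v_1,v_2\ge 0}\frac{p^{\,r\lceil \max(v_1,v_2)/b\rceil}}{p^{(r+1)(v_1+v_2)}} .
\]

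The next step is to evaluate $S_p$ in closed form. Setting $x=p^{-(r+1)}$ and splitting the double sum according to $v_1=v_2$, $v_1>v_2$ and $v_1<v_2$ (the last two equal by symmetry) reduces $S_p$ to the single sums $T(y):=\sum_{v\ge 0}p^{\,r\lceil v/b\rceil}y^{v}$ taken at $y=x$ and $y=x^{2}$. The device for $T(y)$ is that $\lceil v/b\rceil$ is constant on each block $\{(j-1)b+1,\dots,jb\}$: summing the geometric progression over a block and then over $j$ gives
\[
T(y)=1+\frac{p^{r}y\,(1-y^{b})}{(1-y)\,(1-p^{r}y^{b})},
\]
which is legitimate because $p^{r}y^{b}=p^{-\beta}<1$ for $y=x$ (and $p^{r}y^{b}=p^{-\gamma}<1$ for $y=x^{2}$), where $\beta=(r+1)b-r$ and $\gamma=2(r+1)b-r$ are positive for $r>-1$, $b\ge2$. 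Assembling the three pieces and using $p^{r}x^{b}=p^{-\beta}$, $p^{r}x^{2b}=p^{-\gamma}$ gives
\[
S_p=1+\frac{1}{(1-x)^{2}}\left[\frac{2p^{-1}(1-x^{b})}{1-p^{-\beta}}-\frac{p^{-(r+2)}(1-x^{2b})}{1-p^{-\gamma}}\right].
\]

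The point of this form is that its only denominators are $(1-p^{-(r+1)})^{2}$, $(1-p^{-\beta})$ and $(1-p^{-\gamma})$, which are exactly the local Euler factors of $\zeta^{2}(r+1)$, $\zeta((r+1)b-r)$ and $\zeta(2(r+1)b-r)$. I would pull these three zeta functions out of $\prod_p(1-1/p)^2S_p$, i.e.\ multiply $S_p$ by $(1-p^{-(r+1)})^{2}(1-p^{-\beta})(1-p^{-\gamma})$, expand the numerator, and collect terms; this should reproduce exactly the bracket displayed in the statement. The residual Euler product $\prod_p(1-1/p)^2[\cdots]$ converges absolutely for $r>0$ (each factor being $1+\BigO{p^{-(r+1)}}$ with $r+1>1$), which justifies the factorization there; since the full constant $\prod_p(1-1/p)^2S_p$ in fact converges for every $r>-1$ (its local factor is $1+\BigO{p^{-\min(r+2,2)}}$), the displayed identity extends to $-1<r\le 0$ by analytic continuation in $r$.

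The one genuinely delicate point is the bookkeeping in the last step: after multiplying out $(1-p^{-(r+1)})^{2}(1-p^{-\beta})(1-p^{-\gamma})\,S_p$ one must check that the numerator collapses \emph{exactly} to $\tfrac1p\bigl(1-p^{-r}\bigr)\bigl(2-2p^{-\gamma}-p^{-(r+1)}+p^{-((r+1)b+1)}\bigr)+(1-p^{-\gamma})(1-p^{-\beta})$, and not merely to an expression of the same order. A cheap safeguard that I would apply first is to compare leading asymptotics as $p\to\infty$: both $(1-p^{-(r+1)})^{2}(1-p^{-\beta})(1-p^{-\gamma})S_p$ and the target bracket tend to $1+2/p$, which fixes the overall normalization and flags any stray sign or factor of two introduced in the block summation.
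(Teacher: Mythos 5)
Your proposal is correct and follows essentially the same route as the paper: specialize Theorem \ref{TheoremEight} to $k=2$, $f=(\mathrm{id})^r$, split the local double sum according to the order relation between $v_1$ and $v_2$, sum geometric progressions over the blocks on which $\lceil v/b\rceil$ is constant, and extract the Euler factors of $\zeta^2(r+1)\zeta((r+1)b-r)\zeta(2(r+1)b-r)$. Your closed form for $S_p$ is correct and, upon multiplying by $(1-p^{-(r+1)})^2(1-p^{-\beta})(1-p^{-\gamma})$, does collapse to the bracket in the statement (e.g.\ it checks out exactly for $r=1$, $b=2$), so the remaining "bookkeeping" step you flag goes through as claimed.
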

Section 2 contains some arithmetic and analytic properties of the generalized gcd function, and its arithmetic mean $h_b(n)$. Section 3 contains proofs of Theorems \ref{TheoremOne}, \ref{TheoremTwo}, and \ref{TheoremThree}. Section 4 discusses results on the harmonic mean. Section 5 contains the proof of the average order of the geometric mean. We prove Theorem \ref{TheoremSix} in Section 6. Results on the generalized lcm-sums are discussed in Section 7 of the paper. Finally, in Section 8, we discuss some possible future work emerging out of this note. 
\section{Preliminary properties}
The generalized-gcd function, and its mean $h_b(n)$ has some interesting properties. In what follows we collect some of these properties. All of these can be verified using elementary techniques. 
\begin{enumerate}
\item The generalized-gcd is a periodic function. That is, for any $m, r, s\in\mathbb{Z}$, we have: 
\[(r,s)_b=(r+ms,s)_b\] 
    \item The relation between the usual $\gcd$ function and the generalized-$\gcd$ function is given by:
\[ \gcd(r,s)= (r,s)_b\gcd\left(\frac{r}{(r,s)_b},\frac{s}{(r,s)_b}\right).\]
\item For $n=p^{2k}$, where $p$ is prime and $k$ is any positive integer, $h_b(n)$ can be interpreted in the theory of finite Abelian groups in the following manner: \[ h_b(n)=\#\{(i,j)\in\mathbb{Z}_n \times \mathbb{Z}_{p^k} \ | \ i.j^b=0\}.\]
\end{enumerate}
In addition to the above properties $h_b(n)$ owns the following properties which will be used in proving the main results on the average order of $h_b(n)$.
\begin{lem}\label{LemmaTwo}
\begin{enumerate}
\item The function $h_b(n)$ is multiplicative, that is, for $(m,n)=1$, $h_b(mn)=h_b(m)h_b(n)$.
\item For any prime p and for $0 \leq j<b$, $k\in\mathbb{N}$:
    \[h_b(p^{kb+j})=p^{k+j}\left(1+\frac{(p^b-1)(p^{k(b-1)}-1)}{p(p^{b-1}-1)}\right).\]
\item The Dirichlet series associated with $h_b(n)$ is 
\[\sum_{n\geq1}\frac{h_b(n)}{n^s} = \frac{\zeta(bs-1)\zeta(s-1)}{\zeta(2s)}, \]
and is absolutely convergent in $\Re(s)>2$.

\item The function $h_b(n)$ is bounded by \[ |h_b(n)|= \left\{\begin{array}{ll}
         \text{O}(n\log n) & \mbox{if } b=2  \\
         \text{O}(n) & \mbox{if }b\geq 3.
         \end{array}\right.\]
\end{enumerate}
\end{lem}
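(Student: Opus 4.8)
The crux of the whole lemma is a single convolution identity for $(j,n)_b$, from which all four parts follow with little further effort. Starting from the elementary relation $e=\sum_{d\mid e}\phi(d)$ applied to the largest $b$-th power divisor of $\gcd(j,n)$, one checks that $d^b\mid\gcd(j,n)$ holds exactly when $d$ divides $(j,n)_b$, and hence
\[(j,n)_b=\sum_{d^b\mid\gcd(j,n)}\phi(d)=\sum_{\substack{d^b\mid j\\ d^b\mid n}}\phi(d).\]
Summing over $j\le n$ and interchanging the order of summation, the inner count of $j\le n$ divisible by $d^b$ is exactly $n/d^b$ whenever $d^b\mid n$, so I obtain the closed form
\[h_b(n)=n\sum_{d^b\mid n}\frac{\phi(d)}{d^b}=(\mathrm{id}\ast g_b)(n),\]
where $g_b$ is the multiplicative function supported on perfect $b$-th powers with $g_b(m^b)=\phi(m)$. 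Verifying this identity (and the equivalence $d^b\mid\gcd(j,n)\iff d\mid(j,n)_b$) is the one genuinely conceptual step; everything after it is bookkeeping.

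Parts (1) and (3) are then immediate. Since both $\mathrm{id}$ and $g_b$ are multiplicative, so is their Dirichlet convolution $h_b$, giving multiplicativity. For the Dirichlet series I multiply the two generating functions: $\mathrm{id}$ contributes $\zeta(s-1)$, while $\sum_n g_b(n)n^{-s}=\sum_m\phi(m)m^{-bs}=\zeta(bs-1)/\zeta(bs)$ by the standard evaluation of the $\phi$-series. Thus $\sum_n h_b(n)n^{-s}=\zeta(s-1)\zeta(bs-1)/\zeta(bs)$; the rightmost singularity is the pole of $\zeta(s-1)$ at $s=2$, which pins down absolute convergence for $\Re(s)>2$ and, via a residue at $s=2$, is consistent with the leading constant $\zeta(2b-1)/\zeta(2b)$ appearing in Theorem \ref{TheoremThree}. (For $b=2$ the denominator $\zeta(bs)$ coincides with $\zeta(2s)$.)

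For part (2) I specialize the closed form to $n=p^{kb+j}$ with $0\le j<b$: the condition $d^b\mid p^{kb+j}$ forces $d=p^a$ with $a\le k$, so
\[h_b(p^{kb+j})=p^{kb+j}\sum_{a=0}^{k}\frac{\phi(p^a)}{p^{ab}}=p^{kb+j}\left(1+\frac{p-1}{p}\sum_{a=1}^{k}p^{a(1-b)}\right).\]
Summing the finite geometric series and factoring out $p^{k+j}$ yields the stated expression. I expect this routine but slightly delicate algebraic simplification to be the main obstacle, since matching it to the advertised closed form requires the collapse $(p-1)-(p^b-1)=-p(p^{b-1}-1)$ to reconcile the two presentations.

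Finally, part (4) follows from the same closed form by the crude bound $\phi(d)\le d$, which gives $h_b(n)\le n\sum_{d^b\mid n}d^{-(b-1)}$. For $b\ge 3$ the sum is dominated by the convergent series $\zeta(b-1)$, so $h_b(n)=\mathrm{O}(n)$; for $b=2$ the divisors $d$ with $d^2\mid n$ satisfy $d\le\sqrt n$, whence the sum is at most the harmonic sum $\sum_{d\le\sqrt n}d^{-1}=\mathrm{O}(\log n)$, giving $h_b(n)=\mathrm{O}(n\log n)$.
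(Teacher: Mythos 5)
Your proof is correct, but it runs along a genuinely different convolution identity than the paper's. The paper groups the terms of $h_b(n)=\sum_{j\le n}(j,n)_b$ by the value $d=(j,n)_b$ and counts each class with Klee's totient $\phi_b$, arriving at $h_b=(n^{1/b}\chi_b)\ast\phi_b$, i.e.\ the factorization $\zeta(bs-1)\cdot\bigl(\zeta(s-1)/\zeta(bs)\bigr)$; you instead apply Gauss's identity $m=\sum_{d\mid m}\phi(d)$ to $m=(j,n)_b$, use the equivalence $d\mid(j,n)_b\iff d^b\mid\gcd(j,n)$ (which is correct and is indeed the one step worth the care you give it), and swap sums to get $h_b=\mathrm{id}\ast g_b$ with $g_b(d^b)=\phi(d)$, i.e.\ the factorization $\zeta(s-1)\cdot\bigl(\zeta(bs-1)/\zeta(bs)\bigr)$. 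The two are the same Dirichlet series bracketed differently, so parts (1)--(3) come out identically; what your route buys is that it never introduces $\phi_b$ or its Euler product, and it turns part (4) into a one-liner ($\phi(d)\le d$, then $\zeta(b-1)<\infty$ for $b\ge3$ and a harmonic sum over $d\le\sqrt{n}$ for $b=2$), which is appreciably cleaner than the paper's M\"obius-inversion estimate. Your prime-power computation in (2) checks out (both forms give $h_2(4)=5$, for instance), and the reconciliation $(p^b-1)-(p-1)=p(p^{b-1}-1)$ is exactly the identity needed to match the stated closed form. One point to make explicit rather than parenthetical: the denominator you obtain, $\zeta(bs)$, disagrees with the $\zeta(2s)$ printed in the lemma statement; since the paper's own proof, the Perron integrand in Section 3, and the constant $\zeta(2b-1)/\zeta(2b)$ in Theorem \ref{TheoremThree} all require $\zeta(bs)$, the statement contains a typo and your version is the correct one.
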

\begin{proof}
\begin{enumerate}
\item To show $h_b(n)$ is multiplicative, we write it as a convolution of $\phi_b(n)$ and ${n}^{1/b}\chi_b(n)$, where $\chi_b(n)$ is a characteristic b-th power function given by 
\[\chi_b(n)=\left\{\begin{array}{ll}
         1 & \mbox{if } n=m^b\ \text{for some}\ m\in\mathbb{Z},  \\
         0 & \text{otherwise}.
         \end{array}\right.\numberthis \label{chib}\]
         The function $\phi_b(n)$ is the generalization of the Euler phi function attributed to Klee \cite{Klee} and also called as Klee's totient. It is clear that $\phi_b(n)=\phi(n)$ for $b=1$, where $\phi(n)$ is the usual Euler phi function. For a positive integer $b$ and $n$, it is defined as $\phi_b(n):= \#\{k\leq n:(k,n)_b=1 \}$. We observe that $\phi_b$ is a multiplicative function and it is equal to \[ \phi_b(n)= n\prod_{\substack{ p \ \text{prime} \\ p^b|n}}(1-p^{-b}).\numberthis \label{phib}\]
        If $(m,n)_b=d$ then $(\frac{m}{d^b},\frac{n}{d^b})_b=1$, then 
        \begin{align*}\label{hb}
        h_b(n)  & =  \sum_{j\leq n}(j,n)_b
   =\sum_{d^b|n}d\sum_{\substack{j\leq n \\ (j,n)_b=d}}1 =\sum_{d^b|n}d\sum_{\substack{j\leq n \\ (\frac{j}{d^b},\frac{n}{d^b})_b=1}}1
  \\&=\sum_{d^b|n}d\phi_b(\frac{n}{d^b})=\sum_{m|n}{m}^{1/b}\phi_b(\frac{n}{m})\chi_b(m).\numberthis  
         \end{align*} 
         
         The result follows from the multiplicativity of $\phi_b(n)$ and
         ${n}^{1/b}\chi_b(n)$.
          \item From above, we have
         \[h_b(n)=\sum_{m|n}{m}^{1/b}\phi_b(\frac{n}{m})\chi_b(m)=n^{1/b}\sum_{m|n}\frac{1}{{m}^{1/b}}\phi_b(m)\chi_b(\frac{n}{m})\]
         take $n=p^{kb+j}$, $0 \leq j<b$, $k\in\mathbb{N}$
         \begin{align*}
             h_b(p^{kb+j}) & = p^{k+j/b} \left[\frac{p^j}{p^{j/b}}+\sum_{l=1}^k\frac{\phi_b(p^{lb+j})}{p^{l+j/b}}\right]\\
             & = p^{k} \left[p^j+\sum_{l=1}^{k}\frac{p^{lb+j-b}(p^b-1)}{p^l}\right]\\
             & = p^{k+j}\left[1+\frac{(p^b-1)(p^{k(b-1)}-1)}{p(p^{b-1}-1)}\right]
         \end{align*}
\item The function $\phi_b(n)$ has a Dirichlet series associated to it which converges absolutely for $\Re(s)>2$ and  and
has an analytic continuation to a meromorphic function defined on the whole complex plane. It is given by 
\begin{align*}\label{Dirichlet-phi}
    \sum_{n\geq1}\frac{\phi_b(n)}{n^s}& =\prod_p\left(1+\sum_{m=1}^{\infty}\frac{\phi_b(p^m)}{p^{ms}}\right)
 =\prod_p\left(\frac{(1-\frac{1}{p^{bs}})}{(1-\frac{1}{p^{s-1}})}\right)
 \\&= \frac{\zeta(s-1)}{\zeta(bs)}.\numberthis
\end{align*}
The Dirichlet series of $n^{1/b}\chi_b(n)$ converges for $\Re(s)>2/b$, and is given by
\begin{align*}\label{Dirichlet-chi}
    \sum_{n\geq1}\frac{n^{1/b}\chi_b(n)}{n^s} & = \prod_{p}\left(1+\sum_{m=1}^{\infty}\frac{{(p^{m/b}\chi_b(p^m))}}{p^{ms}}\right)
 =\prod_p\left(1+\frac{1}{p^{bs-1}}+\frac{1}{p^{2bs-2}}+.......\right)
 \\&=\zeta(bs-1).\numberthis\\
\end{align*}
Using \eqref{hb}, \eqref{Dirichlet-phi}, and \eqref{Dirichlet-chi}, we obtain the Dirichlet series of $h_b(n)$ which converges for $\Re(s)>2.$
\[\sum_{n\geq1}\frac{h_b(n)}{n^s} = \frac{\zeta(bs-1)\zeta(s-1)}{\zeta(bs)}.\]
\item The function $|h_b(n)|$ is bounded by
\begin{align*}
    |h_b(n)|& = \left|\sum_{j\leq n}(j,n)_b\right|
=\left|\sum_{d\leq n}d\sum_{\substack{j\leq n \\  (j,n)_b=d}}1\right|
    =\left|\sum_{d\leq n}d\sum_{j\leq n}\sum_{k|\left(\frac{j}{d^b},\frac{n}{d^b}\right)_b}\mu(k)\right|\\
    &\leq \sum_{d\leq n}d\sum_{j\leq n}\sum_{k|\left(\frac{j}{d^b},\frac{n}{d^b}\right)_b}|\mu(k)|
   \leq \sum_{d\leq n}d\sum_{j\leq n}\sum_{\substack{k^b|\frac{j}{d^b} \\  k^b|\frac{n}{d^b}}}1\\
   & \text{Let}\ mk^b= \frac{j}{d^b}  \text{then,} 
m\leq \frac{n}{k^bd^b}\\
&= \sum_{d\leq n^{1/b}}d\sum_{k\leq\frac{n^{1/b}}{d}}\sum_{m\leq\frac{n}{k^bd^b}}1
=\text{O}\left(\sum_{d\leq n^{1/b}}d\sum_{k\leq\frac{n^{1/b}}{d}}\frac{n}{k^bd^b}\right)
=\text{O}\left(\sum_{d\leq n^{1/b}}\frac{n}{d^{b-1}}\sum_{k\leq\frac{n^{1/b}}{d}}\frac{1}{k^b}\right)\\
&=\text{O}\left(\sum_{d\leq n^{1/b}}\frac{n}{d^{b-1}}\right)
   = \left\{\begin{array}{ll}
         \text{O}(n\log n) & \mbox{if } b=2  \\
         \text{O}(n) & \mbox{if }b\geq 3.
         \end{array}\right.
\end{align*}
\end{enumerate}
\end{proof}
\section{Arithmetic Mean}
\subsection{Proofs of Theorems \ref{TheoremOne} and \ref{TheoremTwo}}
We fix a positive integer $b> 0$, and use Perron's formula to estimate the weighted mean of $h_b(n)$. 
For a complex number $s$, we write $s=\sigma+it$.
By Perron's formula (\cite[page 130]{Tenan}), for $\alpha>0$,
\begin{align}\label{lineintegral1}
    \sum_{n\leq x}\left(1-\frac{n}{x}\right)h_b(n)= \frac{1}{2\pi i} \int_{\alpha-i\infty}^{\alpha+i\infty}{\frac{x^s\zeta(bs-1)\zeta(s-1)}{s(s+1)\zeta(bs)}}ds.
\end{align}
Fix $T,U>0$ such that $2\le T\le x$, $T\le U\le x^2$. Let
\[{\alpha=2+\frac{B}{\log X}}\ \text{and} \ \beta =\frac{1}{b}-\frac{A}{{(\log 2T)}^{2/3}{(\log \log 2T)}^{1/3}},\] 
for some absolute constant $B>0$, and $A=B/b$.
We solve the line integral in \eqref{lineintegral1} by modifying the path of integration from $\alpha-iU$ to $\alpha+iU$ into a path:
$l_1$: $\alpha+iU$ to $\alpha+i\infty$; $l_2$: $2/b+iU$ to $\alpha+iU$; $l_3$: $2/b+iT$ to $2/b+iU$; $l_4$: $\beta+iT$ to $2/b+iT$; $l_5$: $\beta-iT$ to $\beta+iT$; $l_6$: $2/b-iT$ to $\beta-iT$; $l_7$: $2/b-iU$ to $2/b-iT$; $l_8$: $\alpha-iU$ to $2/b-iU$; $l_9: \alpha-i\infty$ to $\alpha-iU$. 
By Cauchy's residue theorem,
\begin{align}\label{Cauchy}
    \frac{1}{2\pi i}\int_{\alpha-i\infty}^{\alpha+i\infty}{\frac{x^s\zeta(bs-1)\zeta(s-1)}{s(s+1)\zeta(bs)}}ds= \frac{x^2\zeta(2b-1)}{6\zeta(2b)}+\frac{b^2x^{2/b}\zeta(2/b-1)}{2(2+b)\zeta(2)}+\sum_{i=1}^9J_i.
\end{align}
The first two terms in the right side of \eqref{Cauchy} correspond to the residues at the poles of the integrand on the left side of \eqref{Cauchy} inside the modified contour of integration. Here $J_i$ is the integral of the integrand above along the line segments $l_i$. We now estimate $J_is$ using standard bounds for $\zeta(s)$ and $\displaystyle{\frac{1}{\zeta(s)}}$ (\cite [page 47]{Titch}),
\begin{displaymath}
 \zeta(\sigma+it) = \left\{
   \begin{array}{lr}
    \BigO{t^{\frac{1}{2}-\sigma} \log t},& \  -1\le \sigma \le 0,\\
     \BigO{t^{\frac{1- \sigma}{2}} \log t}, &\  0\le \sigma \le 1,\\
      \BigO{ \log t},  &\ 1\le\sigma \le 2,\\
       \BigO{1} , &\ \sigma \ge 2,
     \end{array}
    \right.
\end{displaymath}
and
\begin{displaymath}
 \frac{1}{\zeta(\sigma+it)} = \left\{
   \begin{array}{lr}
     \BigO{ \log t}, & \ 1\le\sigma \le 2,\\
      \BigO{1} , & \sigma \ge 2.
     \end{array}
    \right.
\end{displaymath}
In addition, we also use the Vinogradov-Koroborov zero free region 
(\cite{Vina}, \cite{Koro}),
\[ 
\sigma\ge 1-B{(\log t)}^{-\frac{2}{3}}{(\log\log t)}^{-\frac{1}{3}}, 
\]
in this region
\[\frac{1}{\zeta(s)}=\BigO{{(\log t)}^{\frac{2}{3}}{(\log \log t)}^{\frac{1}{3}}},\]
and $B$ is an absolute constant. 
Along the line segments $l_1$ and $l_8$,  we have 
\begin{align*}
    |J_1|,|J_9|&=\BigO{\int_{U}^{\infty} \frac{|x^{\alpha+it}||\zeta(b\alpha-1+ibt)||\zeta(\alpha+1+it)|}{|\alpha+it||\alpha+1+it||\zeta(b\sigma+ibt)|}dt}\\
   & =\BigO{x^2\log U\int_{U}^{\infty}\frac{dt}{t^2}}
    =\BigO{\frac{x^2\log U}{U}}.
\end{align*}
Along the line segments $l_2$ and $l_8$, we estimate the integrals $J_2$ and $J_8$ separately for $b=2$, and $b\ge 3$. For $b=2$, we decompose the integrals into three parts in the range $1\le \sigma\le 3/2$, $3/2\le \sigma\le 2$, and $2\le \sigma\le \alpha$, so that
\begin{align*}
    |J_2|,|J_8|&=\text{O}\left(\int_{1}^{3/2} \frac{|x^{\sigma+iU}||\zeta(b\sigma-1+ibU)||\zeta(\sigma-1+iU)|}{|\sigma+iU||\sigma+1+iU||\zeta(b\sigma+ibU)|}d\sigma \right. \nonumber\\
&+\int_{3/2}^2 \frac{|x^{\sigma+iU}||\zeta(b\sigma-1+ibU)||\zeta(\sigma-1+iU)|}{|\sigma+iU||\sigma+1+iU||\zeta(b\sigma+ibU)|}d\sigma  \nonumber\\
&\left. +\int_{2}^\alpha \frac{|x^{\sigma+iU}||\zeta(b\sigma-1+ibU)||\zeta(\sigma-1+iU)|}{|\sigma+iTU||\sigma+1+iU||\zeta(b\sigma+ibU)|}d\sigma\right) \nonumber\\
&=\BigO{\int_{1}^{3/2} \frac{x^{\sigma}U^{1-\sigma/2}\log^2 U}{U^2}d\sigma + \int_{3/2}^2 \frac{x^{\sigma}U^{1-\sigma/2}\log U}{U^2}d\sigma + \int_{2}^\alpha \frac{x^{\sigma}\log U}{U^2}d\sigma}\nonumber\\
&=\BigO{\frac{\log^2 U}{U}\int_{1}^{3/2} \left(\frac{x}{\sqrt U}\right)^\sigma d\sigma +\frac{(\log U)}{U}\int_{3/2}^{2} \left(\frac{x}{\sqrt U}\right)^\sigma d\sigma+\frac{\log U}{U^2} \int_{2}^\alpha {x^{\sigma}}d\sigma}\nonumber\\
&=\BigO{\frac{x^2\log U}{U^2}}. \label{J2J8a}
\end{align*}
We now estimate $J_2,$ and $J_8$ for $b\ge3$, for which we decompose the limits of the integral into $2/b\le\sigma<1$, $1\le\sigma\le2$, and $2\le\sigma\le\alpha$, and obtain
\begin{align*} 
    |J_2|,|J_8|&=\text{O}\left(\int_{2/b}^{1} \frac{|x^{\sigma+iU}||\zeta(b\sigma-1+ibU)||\zeta(\sigma-1+iU)|}{|\sigma+iU||\sigma+1+iU||\zeta(b\sigma+ibU)|}d\sigma \right. \nonumber\\
&+\int_{1}^2 \frac{|x^{\sigma+iU}||\zeta(b\sigma-1+ibU)||\zeta(\sigma-1+iU)|}{|\sigma+iU||\sigma+1+iU||\zeta(b\sigma+ibU)|}d\sigma  \nonumber\\
 & \left.+\int_{2}^\alpha \frac{|x^{\sigma+iU}||\zeta(b\sigma-1+ibU)||\zeta(\sigma-1+iU)|}{|\sigma+iTU||\sigma+1+iU||\zeta(b\sigma+ibU)|}d\sigma\right)\nonumber\\
&=\text{O}\left(\int_{2/b}^{1} \frac{x^{\sigma}U^{3/2-\sigma}\log^2 U}{U^2}d\sigma + \int_{1}^2 \frac{x^{\sigma}U^{1/2-\sigma}\log U}{U^2}d\sigma + \int_{2}^\alpha \frac{x^{\sigma}\log U}{U^2}d\sigma\right) \nonumber\\
&=\text{O}\left(\frac{\log^2 U}{\sqrt{U}}\int_{2/b}^{1} \left(\frac{x}{ U}\right)^\sigma d\sigma +\frac{(\log U)}{U^{3/2}}\int_{1}^{2} \left(\frac{x}{U}\right)^\sigma d\sigma+\frac{\log U}{U^2} \int_{2}^\alpha {x^{\sigma}}d\sigma\right) \nonumber\\
&=\text{O}\left(\frac{x^2\log U}{U^2}\right).
\end{align*}
On the line segments $l_3$ and $l_7$, $s=2/b+it, T\leq |t|\leq U$, we have 
\begin{align*}
    |J_3|,|J_7|&=\text{O}\left(\int_{T}^{U} \frac{|x^{2/b+it}||\zeta(1+ibt)||\zeta(2/b-1+it)|}{|1+it||2+it||\zeta(2+ibt)|}dt\right)
   =\text{O}\left(x^{2/b}\int_{T}^{U}\frac{t^{3/2-2/b}\log^2 t}{t^2}dt\right)\\
  & =\text{O}\left(x^{2/b}\int_{T}^{U}\frac{\log^2t}{t^{1/2+2/b}}dt\right)
    =\text{O}\left(x^{2/b}U^{1/2-2/b}\log^2U \right)+\text{O}\left(x^{2/b}T^{1/2-2/b}\log^2T\right).
\end{align*}
For $b=2, 3$, since $T<U$, the above error can be estimated to be
    \[|J_3|,|J_7|=\text{O}\left(x^{2/b}T^{1/2-2/b}\log^2T\right).\]
And, for $b\ge 4$, we have
    \[|J_3|,|J_7|=\text{O}\left(x^{2/b}U^{1/2-2/b}\log^2U \right).\]
Next, on the line segments $l_4$, and $l_6$, for $b=2$, we have
\begin{align*}
    |J_4|,|J_6|&= \text{O}\left(\int_{\beta}^{1} \frac{|x^{\sigma+iT}||\zeta(2\sigma-1+2iT)||\zeta(\sigma-1+iT)|}{|\sigma+iT||\sigma+1+iT||\zeta(2\sigma+2iT)|}d\sigma\right) \nonumber \\
&=\text{O}\left(\int_{\beta}^{1} \frac{x^{\sigma} T^{5/2-2\sigma}\log^3 T}{T^2}d\sigma\right)= \text{O}\left(T^{1/2}\log^3T\int_{\beta}^{1}\left(\frac{x}{T^2}\right)^\sigma d\sigma\right) \nonumber\\
&=\text{O}\left(\frac{x\log^3T}{T^{3/2}}\right). 
\end{align*}
For $b\ge3$, we break the integral from $\beta$ to 1 into $\beta$ to $1/b$, and $1/b$ to $2/b$, and obtain the following:
\begin{align*}
    |J_4|,|J_6|&= \BigO{\int_{\beta}^{2/b} \frac{|x^{\sigma+iT}||\zeta(b\sigma-1+ibT)||\zeta(\sigma-1+iT)|}{|\sigma+iT||\sigma+1+iT||\zeta(b\sigma+ibT)|}d\sigma} \nonumber\\
    &=\BigO{\int_{\beta}^{1/b} \frac{x^{\sigma}T^{3-(b+1)\sigma}{(\log T)}^{8/3}(\log\log T)^{1/3}}{T^2}d\sigma}\\&+ \BigO{\int_{1/b}^{2/b} \frac{x^{\sigma}T^{5/2-(b+2)\sigma/2}\log^3 T}{T^2}d\sigma} \nonumber \\
&=\text{O}\left(\frac{x^{1/b}{(\log T)}^{8/3}(\log\log T)^{1/3}}{T^{1/b}}\right)+\text{O}\left(\frac{x^{2/b}\log^3 T}{T^{1/2+2/b}}\right)=\text{O}\left(\frac{x^{2/b}\log^3 T}{T^{1/2+2/b}}\right). \label{J4J6b}
\end{align*}
Lastly, along the line segment $l_5$, for all $b\ge 2$, we have
\begin{align*}
    |J_5|&=\text{O}\left(\int_{-T}^{T} \frac{|x^{\beta+it}||\zeta(b\beta-1+ibt)||\zeta(\beta-1+it)|}{|\beta+it||\beta+1+it||\zeta(b\beta+ibt)|}dt\right)\\
   & =\text{O}\left(x^\beta \int_{-T}^{T}\frac{t^{3-(b+1)\beta}(\log(2+|t|) )^{8/3}(\log\log(3+|t|)^{1/3})}{1+t^2}dt\right)
   =\text{O}\left(x^\beta\right).
\end{align*}
Collecting all the above estimates for $b=2, 3$, and setting $U=x^{3/2}$,
and $\displaystyle{T=\exp\left(\frac{c_1{(\log x)}^{3/5}}{{(\log \log x)}^{1/5}}\right)}$, one obtains Theorem \ref{TheoremOne}. Likewise, for $b\ge 4$, setting $U=x$, and $T$ as above, one obtains Theorem \ref{TheoremTwo}.
\subsection{Mean value estimate}
In this section, we derive an asymptotic formula for the average of the generalized gcd-sum function. For $b=1$, this has been done in \cite[Theorem 4.7]{Brou}.
\begin{proof}[Proof of Theorem \ref{TheoremThree}]
\begin{align*}
       \sum_{n\le x} h_b(n)  & = \sum_{n\le x} \sum_{j\leq n}(j,n)_b
   =\sum_{n\le x}\sum_{d^b|n}d\sum_{\substack{j\leq n \\ (j,n)_b=d}}1 =\sum_{n\le x}\sum_{d^b|n}d\sum_{\substack{j\leq n \\ (\frac{j}{d^b},\frac{n}{d^b})_b=1}}1\\
   &=\sum_{n\le x}\sum_{d^b|n}d\phi_b(\frac{n}{d^b})=\sum_{d \le x^{1/b}}d \sum_{k\le \frac{x}{d^b}}\phi_b(k)
         \end{align*}   
          McCarthy \cite{McCarthy} computed the average order of Klee's totient function $\phi_b(n)$ to be
  \[ \sum_{n\le x}\phi_b(n)= \frac{x^2}{2\zeta(2b)}+ \text{O}(x).\]
    which implies
    \begin{align*}
        \sum_{n\le x} h_b(n)  & =\sum_{d \le x^{1/b}}d\left[\frac{x^2}{2\zeta(2b)d^{2b}}+\text{O}\left(\frac{x}{d^b}\right)\right]\\
        & =\frac{x^2}{2\zeta(2b)}\sum_{d \le x^{1/b}}\frac{1}{d^{2b-1}}+\text{O}\left(x\sum_{d \le x^{1/b}}\frac{1}{d^{b-1}}\right)
    \end{align*}.
Therefore we get the required result,
    \[\sum_{n\le x} h_b(n)=\frac{\zeta(2b-1)}{2\zeta(2b)}x^2+E_b(x),\numberthis\label{hn2}\]
    where $E_2(x)=\BigO{x\log x}$, and $E_b(x)=\BigO{x}$ for $b\ge 3.$
    \end{proof}
    \section{Harmonic Mean}
    The harmonic mean $H_b(n)$ defined in \eqref{harmonicmean} is a multiplicative function and has a Dirichlet series representation. The following lemmas cover these properties, using which we prove Theorem \ref{TheoremFive}.
    \begin{lem}\label{Dirichharm}
    The function $H_b(n)$ is multiplicative and its value at prime powers is given by,
    \begin{displaymath}
 H_b(p^{kb+j}) = \left\{
   \begin{array}{lr}
    1,& \ 0\le j\le b-1, \ k=0,\\
     \dfrac{p^{k(b+1)}(p^{b+1}-1)}{p^{k(b+1)}(p^{b+1}-p)+p-1}, &\  0\le j\le b-1, \ k\in\mathbb{N}.\\
     \end{array}
    \right.
\end{displaymath}
    \end{lem}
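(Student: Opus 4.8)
The plan is to reduce everything to the auxiliary sum $g_b(n):=\sum_{j\le n}1/(j,n)_b$, so that $H_b(n)=n/g_b(n)$, and then to prove that $g_b$ is multiplicative; multiplicativity of $H_b$ follows at once, since the identity function $n\mapsto n$ is completely multiplicative and $g_b(n)$ never vanishes (all its terms are positive). To see that $g_b$ is multiplicative I would repeat the grouping argument already used for $h_b$ in \eqref{hb}: collecting the $j$ with $(j,n)_b=d$ and using $\#\{j\le n:(j,n)_b=d\}=\phi_b(n/d^b)$ gives $g_b(n)=\sum_{d^b\mid n}d^{-1}\phi_b(n/d^b)=\sum_{m\mid n}m^{-1/b}\chi_b(m)\,\phi_b(n/m)$, i.e. $g_b$ is the Dirichlet convolution of $m\mapsto m^{-1/b}\chi_b(m)$ and $\phi_b$. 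Both factors are multiplicative (the first because $\chi_b$ in \eqref{chib} detects $b$-th powers, the second by \eqref{phib}), hence so is $g_b$, and therefore so is $H_b$.

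For the values at prime powers I would evaluate $g_b(p^{kb+j})$ directly from
\[
g_b(p^{kb+j})=\sum_{l=0}^{k}p^{-l}\,\phi_b\!\left(p^{(k-l)b+j}\right),
\]
where the sum stops at $l=k$ because $0\le j<b$ forces every $b$-th-power divisor $d=p^l$ with $lb\le kb+j$ to satisfy $l\le k$. Using \eqref{phib} in the form $\phi_b(p^m)=p^m$ when $m<b$ and $\phi_b(p^m)=p^m(1-p^{-b})$ when $m\ge b$, the single term $l=k$ contributes $p^{j-k}$ (here $m=j<b$), while the terms $l=0,\dots,k-1$ contribute a geometric progression in $p^{b+1}$: setting $i=k-l$ they equal $p^{j-k}(1-p^{-b})\sum_{i=1}^{k}p^{i(b+1)}$. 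Summing the progression and factoring out $p^{j-k}$ yields
\[
g_b(p^{kb+j})=p^{j-k}\Bigl[\,1+(p^{b+1}-p)\,\frac{p^{k(b+1)}-1}{p^{b+1}-1}\,\Bigr].
\]

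Finally I would form $H_b(p^{kb+j})=p^{kb+j}/g_b(p^{kb+j})$. Since $p^{kb+j}/p^{j-k}=p^{k(b+1)}$, clearing the denominator by $(p^{b+1}-1)$ and using the simplification $(p^{b+1}-1)+(p^{b+1}-p)(p^{k(b+1)}-1)=p^{k(b+1)}(p^{b+1}-p)+(p-1)$ produces exactly the stated expression in the case $k\in\mathbb{N}$; the case $k=0$ is then immediate on substituting $k=0$, which collapses that formula to $1$, in agreement with the fact that $g_b(p^j)=p^j$ and hence $H_b(p^j)=1$ for $0\le j\le b-1$. The only genuine effort is the bookkeeping in this last algebraic step (the geometric-series evaluation and the collapse of the denominator); everything else is a routine adaptation of the convolution computation already carried out for $h_b$ in Lemma \ref{LemmaTwo}.
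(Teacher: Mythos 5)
Your proposal is correct and follows essentially the same route as the paper: both express $n/H_b(n)=\sum_{j\le n}1/(j,n)_b$ as the Dirichlet convolution of $\phi_b$ with the $b$-th-power-detecting factor (you index by $m=d^b$, the paper by $m=n/d^b$, which is immaterial), deduce multiplicativity, and evaluate at $p^{kb+j}$ by summing the same geometric progression in $p^{b+1}$. The algebraic simplification $(p^{b+1}-1)+(p^{b+1}-p)(p^{k(b+1)}-1)=p^{k(b+1)}(p^{b+1}-p)+p-1$ is exactly the step the paper performs, so no further comment is needed.
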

    \begin{proof}
    From \eqref{harmonicmean}, we have 
    \begin{align*}\label{hb2}
        \frac{n}{H_b(n)} & = \sum_{k=1}^n\frac{1}{(k,n)_b}
   =\sum_{d^b|n}\frac{1}{d}\sum_{\substack{j\leq n \\ (j,n)_b=d}}1 
   =\sum_{d^b|n}\frac{1}{d}\phi_b(\frac{n}{d^b})
  =n^{-1/b}\sum_{m|n}m^{1/b}\chi_b\left(\frac{n}{m}\right)\phi_b(m),\numberthis  
         \end{align*}
         where $\chi_b$ and $\phi_b$ as in \eqref{chib}, and \eqref{phib} respectively. 
     The sum in the right side of \eqref{hb2} is a convolution of multiplicative functions, and so $H_b(n)$ is multiplicative with 
         \[H_b(p^n)=1 \ ; \ 0\le n\le b-1. \]
         For $n=p^{kb+j}$, where $0\leq j\le b-1$ and $k\in\mathbb{N}$, we have
\begin{align*}
        \frac{H_b(p^{kb+j})}{p^{kb+j}}&=p^{k+j/b}\left(\sum_{d|p^{kb+j}}d^{1/b}\chi_b\left(\frac{n}{d}\right)\phi_b(d)\right)^{-1}=p^{k+j/b}\left(p^{j+j/b}+\sum_{l=1}^kp^{l+j/b}\phi_b(p^{lb+j})\right)^{-1}\\
        & = p^k\left(p^j+\sum_{l=1}^klp^{l+lb+j-b}(p^b-1)\right)^{-1}= p^{k-j}\left(1+\frac{(p^b-1)p( p^{k(b+1)}-1)}{(p^{b+1}-1)}\right)^{-1}\\
        & = \frac{p^{k-j}(p^{b+1}-1)}{p^{k(b+1)}(p^{b+1}-p)+p-1}.
        \end{align*}
        \end{proof}
        \begin{lem} \label{lemharmonic}
        The Dirichlet series associated to the harmonic mean $H_b(n)/n$ has an Euler product representation and converges absolutely for $\Re(s)>0$, and can be represented as
        \[\sum_{n=1}^\infty\frac{H_b(n)/n}{n^s}=\frac{\zeta(s+1)}{\zeta(bs+b)}\prod_p\left(1+K_b(s)\right), \numberthis\label{Dirichletharmonic}\]
        where 
        \[K_b(s):=(p^{b+1}-1)\sum_{k=1}^\infty\frac{p^{k}}{p^{k(bs-1)}(p^{k(b+1)}(p^{b+1}-p)+p-1)}.\]
        \end{lem}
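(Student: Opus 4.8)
The plan is to exploit the multiplicativity of $H_b(n)/n$ established in Lemma \ref{Dirichharm} to write the Dirichlet series as an Euler product, and then to compute the local factor at each prime explicitly from the prime-power values recorded there. Since $H_b(n)/n$ is multiplicative, in any half-plane of absolute convergence we have $\sum_{n\ge1}(H_b(n)/n)n^{-s}=\prod_p L_p(s)$ with $L_p(s)=\sum_{m\ge0}(H_b(p^m)/p^m)\,p^{-ms}$, so the entire task reduces to identifying $L_p(s)$ with $\frac{1-p^{-b(s+1)}}{1-p^{-(s+1)}}(1+K_b(s))$, after which the global shape \eqref{Dirichletharmonic} follows from the Euler products of $\zeta(s+1)$ and $\zeta(bs+b)$.

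To compute $L_p(s)$ I would index prime powers as $p^{kb+j}$ with $k\ge0$ and $0\le j\le b-1$, and substitute the two cases of Lemma \ref{Dirichharm}: $H_b(p^j)/p^j=p^{-j}$ for $k=0$, and $H_b(p^{kb+j})/p^{kb+j}=p^{k-j}(p^{b+1}-1)/D_k$ for $k\ge1$, where $D_k:=p^{k(b+1)}(p^{b+1}-p)+p-1$. The crucial observation is that in every summand the dependence on $j$ is exactly $p^{-j(s+1)}$: the term $(H_b(p^{kb+j})/p^{kb+j})\,p^{-(kb+j)s}$ carries a factor $p^{-j}p^{-js}=p^{-j(s+1)}$ while the remaining part depends only on $k$. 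Hence the inner sum over $0\le j\le b-1$ factors out as the geometric sum $\sum_{j=0}^{b-1}p^{-j(s+1)}=\frac{1-p^{-b(s+1)}}{1-p^{-(s+1)}}$, which is precisely the $p$-Euler factor of $\zeta(s+1)/\zeta(bs+b)$.

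With that factor pulled out, the $k=0$ layer contributes $1$ and the layers $k\ge1$ assemble into $K_b(s)$ after simplifying $p^{k-j}p^{-(kb+j)s}$ via $p^{k}p^{-kbs}=p^{-k(bs-1)}$, giving $L_p(s)=\frac{1-p^{-b(s+1)}}{1-p^{-(s+1)}}(1+K_b(s))$. Multiplying over all primes and using $\prod_p(1-p^{-(s+1)})^{-1}=\zeta(s+1)$ together with $\prod_p(1-p^{-b(s+1)})=\zeta(b(s+1))^{-1}=\zeta(bs+b)^{-1}$ produces the stated identity. For the convergence claim I would note that the $k=1$ term dominates $K_b(s)$ for large $p$, giving $K_b(s)=\BigOb{p^{-b(1+s)}}$; thus $\prod_p(1+K_b(s))$ converges absolutely for $\Re(s)>1/b-1$, and since for $b\ge2$ this half-plane contains $\Re(s)>0$, the binding constraint is the convergence of $\zeta(s+1)$, i.e. $\Re(s)>0$, exactly as asserted.

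The step I expect to be the most delicate is the bookkeeping in the local factor: one must verify that the $j$-geometric sum is common to \emph{all} the $k$-layers, so that it genuinely factors out to yield the $\zeta(s+1)/\zeta(bs+b)$ part, and then check that the residual $k\ge1$ sum collapses to $K_b(s)$ once the exponents $p^{k-j}p^{-(kb+j)s}$ are reorganized. Once the explicit values from Lemma \ref{Dirichharm} are inserted correctly, the remaining manipulations are routine geometric-series and Euler-product identities.
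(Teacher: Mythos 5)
Your proposal is correct and follows essentially the same route as the paper: expand the Euler product using the multiplicativity and prime-power values from Lemma \ref{Dirichharm}, observe that the $j$-dependence is exactly $p^{-j(s+1)}$ so the geometric sum $\sum_{j=0}^{b-1}p^{-j(s+1)}$ factors out to give the $\zeta(s+1)/\zeta(bs+b)$ part, collapse the $k\ge1$ layers into $K_b(s)$, and bound $K_b(s)\ll p^{-b(\sigma+1)}$ to get absolute convergence of $\prod_p(1+K_b(s))$ for $\Re(s)>-1+1/b$. Your simplification $p^kp^{-kbs}=p^{-k(bs-1)}$ matches the paper's own computation (the extra $p^k$ in the displayed formula for $K_b(s)$ in the lemma statement appears to be a typo).
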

        \begin{proof}
Using Lemma \ref{Dirichharm}, the Dirichlet series associated to $H_b(n)$ can be written as
\begin{align*}
       \sum_{n=1}^\infty\frac{H_b(n)/n}{n^s} & = \prod_p\left(1+\sum_{m=1}^\infty \frac{H_b(p^m)/p^m}{p^{ms}}\right) \\&=\prod_p \left(\sum_{j=0}^{b-1}\frac{1/p^j}{p^{js}}+\sum_{j=0}^{b-1}\sum_{k=1}^{\infty}\frac{p^{k-j}(p^{b+1}-1)}{p^{(kb+j)s}(p^{k(b+1)}(p^{b+1}-p)+p-1)}\right)\\
       & = \prod_p\left(\frac{1-1/p^{b(s+1)}}{1-1/p^{s+1}}+\frac{1-1/p^{b(s+1)}}{1-1/p^{s+1}}\sum_{k=1}^\infty\frac{p^{k}(p^{b+1}-1)}{p^{kbs}(p^{k(b+1)}(p^{b+1}-p)+p-1)}\right)\\
       & = \frac{\zeta(s+1)}{\zeta(bs+b)}\prod_p\left(1+(p^{b+1}-1)\sum_{k=1}^\infty\frac{1}{p^{k(bs-1)}(p^{k(b+1)}(p^{b+1}-p)+p-1)}\right)\\
       & =\frac{\zeta(s+1)}{\zeta(bs+b)}\prod_p(1+K_b(s))=\frac{\zeta(s+1)}{\zeta(bs+b)}A_b(s),\numberthis \label{harm}
\end{align*}

where
$A_b(s):=\prod_p(1+K_b(s))$, and 
\[K_b(s)=(p^{b+1}-1)\sum_{k=1}^\infty\frac{1}{p^{k(bs-1)}(p^{k(b+1)}(p^{b+1}-p)+p-1)}.\] 
  .
Let $s=\sigma+it$. Then
\begin{align*}
    \left|\dfrac{1}{p^{k(bs-1)}(p^{k(b+1)}(p^{b+1}-p)+p-1)}\right|&\le
    \left|\dfrac{1}{p^{k(b\sigma-1)}(p^{k(b+1)}(p^{b+1}-p)+p-1)}\right|\\&\le \dfrac{1}{p^{k(b\sigma-1)}p^{k(b+1)}(p^{b+1}-p)}, 
\end{align*}
and so
  \begin{align*}
 |K_b(s)| &= \frac{(p^{b+1}-1)}{(p^{b+1}-p)}\sum_{k=1}^\infty\frac{1}{p^{k(b\sigma-1)}p^{k(b+1)}}\\
    &=\frac{(p^{b+1}-1)}{(p^{b+1}-p)}\sum_{k=1}^\infty\frac{1}{p^{k(b\sigma+b)}}=\frac{(p^{b+1}-1)}{(p^{b+1}-p)}\frac{1/p^{(b\sigma+b)}}{1-1/p^{(b\sigma+b)}}\\
    &=\frac{(1-p^{-b-1})}{(1-p^{-b})}\frac{1}{p^{(b\sigma+b)}-1}.\\
    \end{align*}
        Since $\dfrac{1}{1-p^{-b}}\leq 2$, and $1-p^{-b-1}\leq 1$, we have
        \[\sum_{p}|K_b(s)|\leq \sum_p\frac{2}{p^{b\sigma+b}-1}<\infty.\]
        Hence, the product $A_b(s)=\prod_p(1+K_b(s))$ is absolutely convergent for $\sigma_o>-1+1/b$, and analytic on the half plane $\Re s>-1+1/b$. And for any fixed $\sigma_o>-1+1/b$, we have
        \begin{align*}
            |A_b(s)| &\leq \prod_p(1+|K_b(s)|)\leq \prod_p\left(1+\frac{2}{p^{b\sigma+b}-1}\right)\\
            &= \prod_p\frac{1+p^{b\sigma_o+b}}{1-p^{b\sigma_o+b}}=\frac{\zeta^2(b\sigma_o+b)}{\zeta(2b\sigma_o+2b)}<\infty.
        \end{align*}
        Thus, $A_b(s)$ is uniformly bounded for $\Re s>-1+1/b$. It follows that $A_b(s)$ is absolutely convergent and defines an analytic function on the half plane $\Re s>-1+1/b$. This shows that \eqref{Dirichletharmonic} has a meromorphic continuation to the half plane $\Re(s)>-1+1/b.$ 
\end{proof}
We now turn to proving Theorem \ref{TheoremFive}.
\begin{proof}[Proof of Theorem \ref{TheoremFive}]
Expanding $A_b(s)$ into a Dirichlet series
\[A_b(s)=\sum_{n=1}^\infty\frac{a_b(n)}{n^{s}},\] which is absolutely convergent for $\Re s>-1+1/b$, 
Now, from \cite[D-24]{Series} we have, $\dfrac{\zeta(s)}{\zeta(bs)}= \mathlarger{\sum}_{n=1}^\infty\dfrac{\xi_b(n)}{n^{s}}$ where $\xi_b(n)$ is the characteristic function for $b$-free numbers.
It follows that \[\dfrac{\zeta(s+1)}{\zeta(bs+b)}= \sum_{n=1}^\infty\dfrac{\xi_b(n)/n}{n^{s}}\numberthis\label{xiseries}.\]
 By Lemma \ref{lemharmonic} and \eqref{xiseries}, we obtain
\[\sum_{n=1}^\infty\frac{H_b(n)/n}{n^s}=\sum_{n=1}^\infty\frac{\xi_b(n)/n}{n^{s}}\sum_{n=1}^\infty\frac{a_b(n)}{n^{s}}.\]
This gives
\[\frac{H_b(n)}{n}=\sum_{d|n}\frac{\xi_b(d)}{d}a_b(\frac{n}{d}).\numberthis\label{meanformula}\]
Since the Dirichlet series $\sum_{n=1}^\infty a_b(n)n^{-s}$ is absolute convergent for $\Re s>-1+1/b$. Hence for any $ U>1,$ 
\[\sum_{U<n\le 2U}|a_b(n)| \ll U^{-1+1/b+\epsilon}.\]
Thus, the infinite series $\sum_{m\ge 1}g_b(m)$ is convergent and
\[
\sum_{m\leq x}a_b(m)=C+\text{O}(x^{-1+1/b+\epsilon}),\]
where $C=A_b(0)$ is constant. Using \eqref{meanformula} and the above expression, we obtain
\begin{align*}
    \sum_{n\leq x}\frac{H_b(n)}{n}&= \sum_{m\leq x}\frac{\xi_b(m)}{m}\sum_{n\leq x/m}a_b(n)= \sum_{m\leq x}\frac{\xi_b(m)}{m}\left(C+\text{O}\left(\frac{x^{-1+1/b+\epsilon}}{m^{-1+1/b+\epsilon}}\right)\right)\\
    &=C \sum_{m\leq x}\frac{\xi_b(m)}{m}+\text{O} \left(x^{-1+1/b+\epsilon}\sum_{m\leq x}\frac{\xi_b(m)}{m^{1/b+\epsilon}}\right)\numberthis\label{haverage}
\end{align*}
    The average order of $\xi_b(n)$ is estimated by Walfisz in \cite[Page192]{Walf} as
    \begin{align*}
        \sum_{n\leq x}\xi_b(n)&= \frac{x}{\zeta(b)}+\text{O}(x^{1/b}\exp\{\log^{3/5}x(\log\log x)^{-1/5}\})\\
    &=\frac{x}{\zeta(b)}+\text{O}(E_x),
    \end{align*}
    where $ E_x=x^{1/b}\exp\{\log^{3/5}x(\log\log x)^{-1/5}\}$. Using this estimate and applying summation by parts, we have
    \begin{align*}
        \sum_{m\leq x}\frac{\xi_b(m)}{m}
        &= \frac{1}{\zeta(b)}+\frac{\log x}{\zeta(b)}+\text{O}\left(\frac{E_x}{x}\right), 
        \end{align*} and
\[\sum_{m\leq x}\frac{\xi_b(m)}{m^{1/b+\epsilon}} 
=\BigO{x^{1-1/b-\epsilon}}.\]
    Collecting the above results, and substituting in \eqref{haverage}, we get the required result in theorem \ref{TheoremFive}. 
  \end{proof}
  \section{Geometric mean}
  In this section, we discuss the average value of geometric mean $G_b(n),$ and give a proof of Theorem \ref{TheoremSix}. The method of proof is similar to the proof for harmonic means. We first define the relevent functions. Recall from \eqref{Geometricmean},
 
     \[(G_b(n))^n=\prod_{j\le n}(j,n)_b=\left(\prod_{d^b|n}d^{\phi_b\left(\frac{n}{d^b}\right)}\right).\]
     This implies
      \begin{align*} 
     n\log G_b(n)&=\sum_{d^b|n}\phi_b\left(\frac{n}{d^b}\right)\log d=\frac{1}{b}\sum_{m|n}\phi_b\left(\frac{n}{m}\right)\chi_b(m)\log m.\numberthis\label{gbn}
  \end{align*}
  Here $\chi_b$ and $\phi_b$ is same as in \eqref{chib}, and \eqref{phib} respectively. It follows that the function $n\log G_b(n)$ is a convolution of functions $\phi_b(n)$, and $\chi_b(n)\log n$. So, the Dirichlet series associated to $n \log G_b(n)$ is the product of Dirichlet series associated to  $\phi_b(n)$ and $\chi_b(n)\log n$.  Now, \[\sum_{n=1}^\infty \frac{\chi_b(n)}{n^s}=\zeta(bs) \ \ \text{for} \ \ \Re(s)>1/b,\numberthis\label{Dchib}\]
  and  \[\sum_{n=1}^\infty \frac{\chi_b(n){\log}n}{n^s}=-b\zeta'(bs) \ \ \text{for} \ \ \Re(s)>1/b\numberthis\label{Dchiblog}\]
  Using \eqref{Dirichlet-phi} and \eqref{Dchiblog}, we conclude that $n \log G_b(n)$ has an absolutely convergent Dirichlet series for $\Re s>2$ given by 
  \[\sum_{n=1}^\infty \frac{n \log G_b(n)}{n^s}=-\frac{\zeta(s-1)\zeta'(bs)}{\zeta(bs)}.\] We are now equipped to prove Theorem \ref{TheoremSix}. 
  \begin{proof}[Proof of Theorem \ref{TheoremSix}]
  From \eqref{gbn}, we have
  \begin{align*}
     \sum_{n\le x}n \log G_b(n) =\frac{1}{b}\sum_{n\le x}\sum_{m|n}\phi_b\left(\frac{n}{m}\right)\chi_b(m){\log}m= \frac{1}{b}\sum_{m\le x}\chi_b(m){\log}m \sum_{k\le x/m}\phi_b(k).
   \end{align*}
   McCarthy \cite{McCarthy} computed the average order of Klee's totient function $\phi_b(n)$ to be
  \[ \sum_{n\le x}\phi_b(n)= \frac{x^2}{2\zeta(2b)}+ \text{O}(x).\]
  This gives
   \begin{align*}
    \sum_{n\le x}n \log G_b(n) &= \frac{1}{b}\sum_{m\le x}
    \chi_b(m)\log m\left[\frac{1}{2\zeta(2b)}\frac{x^2}{m^2}+ \text{O}\left(\frac{x}{m}\right)\right] \\
    &= \frac{x^2}{2b\zeta(2b)}\sum_{m\le x}\frac{\chi_b(m)\log m}{m^2}+ \BigOb{x\sum_{m\le x}\frac{\chi_b(m)\log m}{m}}.\numberthis\label{averagegbn}
    \end{align*}
    Moreover, the Dirichlet series $\sum_{n=1}^\infty\frac{\chi_b(m)\log m }{m^s}$ is absolutely convergent for $\Re s>1/b$. Hence for every $\epsilon>0$, and $ U>1 $, 
 \[\sum_{U<n\le 2U}|\chi_b(n)\log (n)| \ll U^{1/b+\epsilon}.\] 
Therefore from \eqref{Dchib}, and \eqref{Dchiblog}, for every $\epsilon>0$,
\[\sum_{m\leq x}\frac{\chi_b(m)\log m}{m}=-b\zeta'(b)+\text{O}(x^{-1+1/b+\epsilon}),
\]
and
\[
  \sum_{m\leq x}\frac{\chi_b(m)\log m}{m^2}=-b\zeta'(2b)+\text{O}(x^{-2+1/b+\epsilon}).
\]
Collecting the above estimates, and substituting in \eqref{averagegbn}, we obtain Theorem \ref{TheoremSix}.
  \end{proof}
  \section{Asymptotics of $M_{f,b,k}(x)$}
    \subsection*{Proof of Theorem \ref{TheoremFour}}
We separate the diagonal terms and use the symmetry of the gcd function to write the following identity
\[\sum_{m,n\leq x}(m,n)_b=2\sum_{n\le x}\sum_{m=1}^{n}(m,n)_b-\sum_{n\le x}(n,n)_b.\numberthis\label{7.1}\]
The diagonal sum in \eqref{7.1} can be estimated as
\begin{align*}
    \sum_{n\le x}(n,n)_b &= \sum_{d\le x^{1/b}} \sum_{d|(n,n)_b}\phi(d) = \sum_{d\le x} \sum_{d^b|n}\phi(d)= \sum_{d\le x^{1/b}}\phi_(d)\left(\frac{x}{d^b}+\text{O}(1)\right)\\
    & = x \sum_{d\le x^{1/b}}\frac{\phi_(d)}{d^b}+\BigO{\sum_{d\le x^{1/b}}\phi(d)}
\end{align*}
  We use the following well known estimates to estimate the above.
  \[\sum_{n\le x}\phi(n)=\frac{x^2}{2\zeta(2)}+\text{O}(x\log x),\]
  \[\sum_{n\leq x}\frac{\phi(n)}{n^2} = \frac{\log x}{\zeta(2)}+\frac{\gamma}{\zeta(2)}-A +\text{O}\left(\frac{\log x}{x}\right),\]
        where $A= \sum_{n=1}^{\infty}\frac{\mu(n)\log n}{n^2}=\frac{\zeta'(2)}{\zeta(2)^2}$, and for $\alpha \ge 1, \alpha \neq 2,$
               \[ \sum_{n\leq x}\frac{\phi(n)}{n^\alpha}= \frac{\zeta(\alpha-1)}{\zeta(\alpha)}+\frac{x^{2-\alpha}}{(2-\alpha)\zeta(2)}-A +\text{O}\left(x^{1-\alpha}\log x\right).\]
Using the above results, and the estimate from \eqref{hn2},  we have
\[\sum_{m,n\leq x}(m,n)_2=\frac{\zeta(3)}{\zeta(4)}x^2+\text{O}(x \log x),\] and 
\[\sum_{m,n\leq x}(m,n)_b=\frac{\zeta(2b-1)}{\zeta(2b)}x^2+\text{O}_b(x).\]
For the $k$-generalized gcd function, $k>2$, and for any arbitrary arithmetical function $f$, we can write
\[\sum_{n_1,n_2,\cdots,n_k\le x}f(n_1,n_2,\cdots,n_k)_b =\sum_{n_1,n_2,\cdots,n_k\le x}\sum_{d|(n_1,n_2,\cdots,n_k)_b}r(d),\]
where $r(n)$ is an arithmetical function such that $f(n)=1\ast r(n)$. Then by Mobius inversion formula, we have the identity
\begin{align*}
    \sum_{n_1,n_2,\cdots,n_k\le x}f(n_1,n_2,\cdots,n_k)_b &=\sum_{d\le x^{1/b}}(f*\mu)(d){\left\lfloor{\frac{x}{d^b}}\right\rfloor}^k. 
\end{align*}
For  $b\ge 2$, $k > 2$ and $f= id$, the above identity leads to the following asymptotic formula
\[\sum_{n_1,n_2,\cdots,n_k\le x}(n_1,n_2,\cdots,n_k)_b=\frac{\zeta(bk-1)}{\zeta(bk)}x^k+\BigOb{x^{k-1}}.\]
\section{Generalized lcm}
For any multiplicative, arithmetical function $f\in \mathcal{A}_r$ we will estimate the average order of $f([n_1,n_2,...,n_k]_b)$. We use the method of \cite{Hilber} where the authors do it for $b=1$. We will only briefly sketch parts of the proof which have been modified from \cite{Hilber}. The generalized lcm is a multiplicative function of $k$ variables, and so $f([n_1,n_2,...,n_k]_b)$ is multiplicative with Dirichlet series described in the following lemma. 
\begin{lem}\label{lcmlem}
 For $b\ge 2$, $k\ge 2$ and  $f\in \mathcal{A}_r$ with $r>-1$,
\[L^b_{f,k}(z_1,\cdots, z_k) =\sum_{n_1,\cdots,n_k=1}^\infty \frac{f([n_1,\cdots, n_k]_b)}{n_1^{z_1},\cdots, n_k^{z_k}}
= \zeta(z_1 - r)\cdots\zeta(z_k - r)H^b_{f,k}(z_1,\cdots , z_k),\]
where $H^b_{f,k}(z_1,\cdots, z_k)$ is absolutely convergent for
 \[ \Re z_1,\cdots,\Re z_k > A := \left\{\begin{array}{ll}
         r+\frac{1}{2} & \mbox{if } r \ge 0 \\
         \frac{r+1}{2} & \mbox{if } -1/2 \le r\le 0.
         \end{array}\right.\]
\end{lem}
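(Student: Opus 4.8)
The plan is to use that $f([n_1,\dots,n_k]_b)$ is multiplicative as a function of the $k$-tuple $(n_1,\dots,n_k)$, so that $L^b_{f,k}$ factors as an Euler product $\prod_p L_p$. First I would record the prime-power value of the generalized lcm: since $[n_1,\dots,n_k]_b$ is the least $d$ with $n_i\mid d^b$ for all $i$, one has $[p^{v_1},\dots,p^{v_k}]_b=p^{\lceil \max(v_1,\dots,v_k)/b\rceil}$, so the local factor is
\[L_p=\sum_{v_1,\dots,v_k\ge 0}\frac{f\bigl(p^{\lceil \max(v_1,\dots,v_k)/b\rceil}\bigr)}{p^{v_1z_1+\cdots+v_kz_k}}.\]
Using condition (2) to bound the coefficients, the dominant single-coordinate term is $f(p)p^{-z_m}\sim p^{r-z_m}$, so $\prod_pL_p$ converges absolutely for $\Re z_m>r+1$; this is the region in which all subsequent manipulations are first justified.

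Since the only singular behaviour should come from the ``one large coordinate'' terms, which imitate $n^r$, I would factor out $\prod_{m=1}^k\zeta(z_m-r)=\prod_p\prod_{m=1}^k(1-p^{r-z_m})^{-1}$ and define
\[H^b_{f,k}(z_1,\dots,z_k):=L^b_{f,k}(z_1,\dots,z_k)\prod_{m=1}^k\zeta(z_m-r)^{-1}=\prod_p H_p,\qquad H_p:=\Bigl(\prod_{m=1}^k(1-p^{r-z_m})\Bigr)L_p.\]
To obtain the stated domain it suffices to prove $\sum_p|H_p-1|<\infty$ for $\Re z_m>A$. I would expand $H_p-1$ by grouping the terms of $L_p$ according to which coordinates are nonzero. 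Writing $f(p)=p^r+e_p$, where condition (1) gives $|e_p|\le C_1p^{r-1/2}$, the decisive point is that the single-coordinate main term $p^{r-z_m}$ of $L_p$ is cancelled exactly by the factor $(1-p^{r-z_m})$: the coefficient of $p^{-z_m}$ in $H_p$ is $f(p)-p^r=e_p$, of size $O(p^{r-1/2-\sigma_m})$, where $\sigma_m:=\Re z_m$.

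The decisive estimate is then the two-coordinate interaction. Collecting all contributions to the coefficient of $p^{-z_i-z_{i'}}$ with $i\ne i'$ gives $f(p)-2f(p)p^r+p^{2r}=p^r-p^{2r}+e_p(1-2p^r)$, whose modulus is $O(p^{\max(r,2r)})$, and a routine estimate shows (via conditions (1)--(2)) that every block of three or more coordinates, and every contribution from higher prime powers, converges over $p$ in a strictly larger half-space and so imposes no new constraint. Hence $\sum_p|H_p-1|$ is controlled by the single-coordinate error, which converges for $\sigma_m>r+\tfrac12$, and by the two-coordinate term, which converges for $\sigma_i+\sigma_{i'}>\max(r,2r)+1$, i.e.\ for $\sigma_i+\sigma_{i'}>2r+1$ when $r\ge0$ (dominant part $p^{2r}$) and $\sigma_i+\sigma_{i'}>r+1$ when $r\le0$ (dominant part $p^{r}$). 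Combining, $\prod_pH_p$ converges absolutely and is holomorphic for $\Re z_m>A$ with $A=r+\tfrac12$ if $r\ge0$ and $A=\tfrac{r+1}{2}$ if $-\tfrac12\le r\le0$. I expect the main obstacle to be the bookkeeping needed to confirm that, once the factor $\prod_m(1-p^{r-z_m})$ has removed the principal singular terms, every surviving coordinate block is dominated by these two --- in particular correctly tracking, as a function of the sign of $r$, whether $p^{mr}$ or $p^{r}$ is the largest power of $p^r$ occurring in a given block's coefficient.
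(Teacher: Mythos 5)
Your proposal is correct and follows essentially the same route as the paper: factor $L^b_{f,k}$ into an Euler product using $[p^{v_1},\dots,p^{v_k}]_b=p^{\lceil\max(v_i)/b\rceil}$, multiply by $\prod_m\zeta(z_m-r)^{-1}$, use condition (1) to see that the single-coordinate main terms cancel up to an error $O(p^{r-1/2-\sigma_m})$ and condition (2) to control the higher blocks, and read off the two half-planes from the one-coordinate error (binding for $r\ge0$) and the two-coordinate term (binding for $-1/2\le r\le0$). The paper organizes the bookkeeping by splitting into the cases $r\ge0$ and $-1/2\le r\le0$ and bounding the whole tail $v_1+\cdots+v_k\ge2$ (resp.\ $\max v_i\ge2$) at once, rather than coefficient by coefficient as you do, but the estimates and conclusions are the same.
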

\begin{proof}
Since $f$ is a multiplicative function, 
\[L^b_{f,k}(z_1,\cdots, z_k) =\sum_{n_1,\cdots,n_k=0}^\infty \frac{f([n_1,\cdots, n_k]_b)}{n^{z_1},\cdots, n^{z_k}}= \prod_p\sum_{v_1,\cdots,v_k=0}^\infty \frac{f(p^{\lceil\frac{\max(v_1,\cdots,v_k)}{b}\rceil})}{p^{v_1z_1+\cdots+v_kz_k}}\numberthis\label{lcm1}.\]
As in \cite{Hilber}, we consider the cases $r\ge 0$, and $-1/2\le r\le 0$ separately.
\begin{case}
 Consider $r\ge 0$ and $\Re z_1,\cdots, \Re z_k \ge \delta \ge r$. Divide the sum of \eqref{lcm1} into two parts 
\[L^b_{f,k}(z_1, \cdots, z_k)=\prod_p\left(1+\frac{f(p)}{p^{z_1}}+\cdots +\frac{f(p)}{p^{z_k}}+\sum_{v_1+\cdots+v_k\ge 2}\dfrac{f(p^{\lceil\frac{\max(v_1,\cdots,v_k)}{b}\rceil})}{p^{v_1z_1+\cdots+v_kz_k}}\right).\numberthis\label{lcm3}\]
Using properties of a function $f\in\mathcal{A}_r$, and the fact $r\ge 0$, we have 
\[\frac{f(p)}{p^{z_i}}= \frac{1}{p^{z_i-r}}+\text{O}\left(\frac{1}{p^{\delta-r+1/2}}\right).\]
Moreover, \[\left|\frac{f(p^{\lceil\frac{\max(v_1,\cdots,v_k)}{b}\rceil})}{p^{v_1z_1+\cdots+v_kz_k}}\right|\le C_3\frac{p^{\frac{\max(v_1,\cdots,v_k)}{b}+1}}{p^{\delta(v_1+\cdots+v_k)}}=\text{O}\left(\frac{p^r}{p^{2(\delta-r/b)}}\right).\]
This gives,
\begin{align*}
    &L^b_{f,k}(z_1,\cdots, z_k){\zeta(z_1-r)}^{-1}\cdots{\zeta(z_k-r)}^{-1}\\&=\prod_p\left(1-\frac{1}{p^{z_1-r}}\right)\cdots\left(1-\frac{1}{p^{z_k-r}}\right)\left[1+\frac{1}{p^{z_1-r}}+\cdots+\frac{1}{p^{z_k-r}}+\text{O}\left(\frac{1}{p^{\delta-r+1/2}}\right)+\text{O}\left(\frac{1}{p^{2(\delta-r/b)-r}}\right)\right]\\
    &=\prod_p\left(1+\text{O}\left(\frac{1}{p^{\delta-r+1/2}}\right)+\text{O}\left(\frac{1}{p^{2(\delta-r)}}\right)+\text{O}\left(\frac{1}{p^{2(\delta-r/b)-r}}\right)\right)
\end{align*}
 The above infinite product is absolutely convergent for $\delta>r+\frac{1}{2}$.
 \end{case}
\begin{case}
  Let $-1/2\le r\le 0$ and $\Re z_1,\cdots, \Re z_k \ge \delta \ge 0$. Dividing the sum in \eqref{lcm1} into two parts, we have
\[L^b_{f,k}(z_1,\cdots , z_k)=\prod_p\left(1+\sum_{\max(v_1,,v_k)=1}\frac{f(p)}{p^{v_1z_1+\cdots+v_kz_k}}+\sum_{\max(v_1,\cdots,v_k)\ge 2}\frac{f(p^{\lceil\frac{\max(v_1,\cdots,v_k)}{b}\rceil})}{p^{v_1z_1+\cdots+v_kz_k}}\right)\numberthis\label{lcm2}\]
If $v_i=1$ for a unique index $i$, $1\le i\le k$ in the first sum of \eqref{lcm2}, one has
\[\frac{f(p)}{p^{z_i}}= \frac{1}{p^{z_i-r}}+\text{O}\left(\frac{1}{p^{\delta-r+1/2}}\right).\]
If at least two $v_i=1$, where $1\le i\le k$, then we have
\[\left|\frac{f(p)}{p^{v_1z_1+\cdots+v_kz_k}}\right|=\text{O}\left(\frac{1}{p^{2\delta-r}}\right).\]
The second sum of \eqref{lcm2} becomes
    \[\left|\frac{f(p^{\lceil\frac{\max(v_1,\cdots,v_k)}{b}\rceil})}{p^{v_1z_1+\cdots+v_kz_k}}\right|\le C_2\frac{p^{r(\frac{\max(v_1,\cdots,v_k)}{b}+1)}}{p^{\delta(v_1+\cdots+v_k)}}\le C_2 \frac{p^{r}}{p^{(\delta-r/b)\max(v_1,\cdots,v_k)}}\]
\[\left|\frac{f(p^{\lceil\frac{\max(v_1,\cdots,v_k)}{b}\rceil})}{p^{v_1z_1+\cdots+v_kz_k}}\right|=\text{O}\left(\frac{1}{p^{2(\delta-r/b)-r}}\right).\]
This yields,
\begin{align*}
    L^b_{f,k}(z_1,\cdots, z_k){\zeta(z_1-r)}^{-1}\cdots{\zeta(z_k-r)}^{-1}
    &=\prod_p\left(1+\BigO{\frac{1}{p^{\delta-r+1/2}}}+\BigO{\frac{1}{p^{2\delta-r}}} \right. \\  &+\left.\BigO{\frac{1}{p^{2(\delta-r/b)-r}}}\right)
\end{align*}
Since $r<0$, it follows that the above series is absolutely convergent for $\delta>\frac{r+1}{2}$.
\end{case}
\end{proof}
\begin{proof}[Proof of Theorem \ref{TheoremEight}]
Let $h^b_{f,k}(n_1,\cdots, n_k)$ be the coefficients in the multi-variable Dirichlet series of $H^b_{f,k}(z_1,\cdots, z_k)$. 
Using Lemma \ref{lcmlem}, we can write
\[f([n_1,\cdots, n_k]_b)= \sum_{d_1|n_1,\cdots,d_k|n_k}h^b_{f,k}(d_1,\cdots,d_k)\left(\frac{n_1}{d_1}\right)^r\cdots\left(\frac{n_k}{d_k}\right)^r,\]
which implies
\begin{align*}
\sum_{n_1,\cdots,n_k\le x}f([n_1,\cdots , n_k]_b)&= \sum_{n_1,\cdots,n_k\le x}\sum_{d_1|n_1,\cdots,d_k|n_k}h^b_{f,k}(d_1,\cdots,d_k)\left(\frac{n_1}{d_1}\right)^r\cdots\left(\frac{n_k}{d_k}\right)^r
\\
&=\sum_{d_1,\cdots,d_k\le x}h^b_{f,k}(d_1,\cdots,d_k)\sum_{m_1\le x/d_1,\cdots,m_k\le x/d_k}m_1^r\cdots m_k^r\\
&= \frac{x^{k(r+1)}}{(r+1)^k}\sum_{d_1,\cdots,d_k\le x}\frac{h^b_{f,k}(d_1,\cdots,d_k)}{d_1^{r+1}\cdots d_k^{r+1}}+T_{k,r}(x),\numberthis\label{lcm4}
\end{align*}
where 
$T_{k,r}(x)= \text{O}(x^{k(r+1)-\frac{1}{2}\min(r+1,1)+\epsilon})$.
The sum on the right side of \eqref{lcm4} can be evaluated as
\begin{align*}
    \sum_{d_1,\cdots,d_k\le x}\frac{h^b_{f,k}(d_1,\cdots,d_k)}{d_1^{r+1}\cdots d_k^{r+1}}& = \sum_{d_1,\cdots,d_k=1}^\infty\frac{h^b_{f,k}(d_1,\cdots,d_k)}{d_1^{r+1}\cdots d_k^{r+1}}-\sum_{1\le s\le k}\sum_{\substack{d_1,\cdots,d_s > x\\d_{s+1},\cdots,d_k\le x}}\frac{h^b_{f,k}(d_1,\cdots,d_k)}{d_1^{r+1}\cdots d_k^{r+1}}\\
    &=H^b_{f,k}(r+1,\cdots,r+1)+\text{O}(x^{k(r+1)-\frac{1}{2}\min(r+1,1)+\epsilon}),
\end{align*} 
where
\begin{align*}
 C^b_{f,k}:=H^b_{f,k}(r+1,\cdots,r+1)=\prod_p\left(1-\frac{1}{p}\right)^k\sum_{v_1,\cdots,v_k=0}^\infty \frac{f(p^{\lceil\frac{\max(v_1,\cdots,v_k)}{b}\rceil})}{p^{(r+1)(v_1+\cdots+v_k)}}.
 \end{align*}
\end{proof}
\begin{proof}[Proof of Corollary \ref{CorOne}]
Let $k=2$ and $f=(id)^r$ in Theorem \ref{TheoremEight}, we obtain \[\sum_{n_1,n_2\le x}[n_1,n_2]_b^r=C^b_{I_r,2}\frac{x^{2(r+1)}}{(r+1)^2}+\text{O}(x^{2(r+1)-\frac{1}{2}\min(r+1,1)+\epsilon}),\]
where
\begin{align*}
  C^b_{I_r,2}&= \prod_p\left(1-\frac{1}{p}\right)^2\sum_{v_1,v_2=0}^\infty \frac{p^{r\lceil\frac{\max(v_1,v_2)}{b}\rceil}}{p^{(r+1)(v_1+v_2)}}\\
  &= \prod_p\left(1-\frac{1}{p}\right)^2\left[\sum_{v_2=0}^\infty\frac{p^{r\lceil\frac{v_2}{b}\rceil}}{p^{(r+1)v_2}}\sum_{0\le v_1<v_2}\frac{1}{p^{(r+1)v_1}}+\sum_{v_2=0}^\infty\frac{1}{p^{(r+1)v_2}}\sum_{v_1\ge v_2}\frac{p^{r\lceil\frac{v_1}{b}\rceil}}{p^{(r+1)v_1}}\right]\\
  &= \prod_p\left(1-\frac{1}{p}\right)^2\left[\sum_{v_2=0}^\infty\frac{p^{r\lceil\frac{v_2}{b}\rceil}}{p^{(r+1)v_2}}\sum_{0\le v_1<v_2}\frac{1}{p^{(r+1)v_1}}+\sum_{v_1=0}^\infty\frac{p^{r\lceil\frac{v_1}{b}\rceil}}{p^{(r+1)v_1}}\sum_{0\le v_2\le v_1}\frac{1}{p^{(r+1)v_2}}\right]
 \\&=\prod_p\left(1-\frac{1}{p}\right)^2\left(1-\frac{1}{p^{(r+1)}}\right)^{-1}\left[\sum_{k=0}^\infty p^{(k+1)r}\sum_{kb< v_2\le (k+1)b}\left(\frac{1}{p^{(r+1)v_2}}-\frac{1}{p^{2(r+1)v_2}}\right)\right.\\
 & \left.+\sum_{k=0}^\infty p^{(k+1)r}\sum_{kb< v_1\le (k+1)b}\left(\frac{1}{p^{(r+1)v_1}}-\frac{1}{p^{2(r+1)v_1+r+1}}\right)+\left(1-\frac{1}{p^{(r+1)}}\right)\right]\\
 &= \zeta^2(r+1)\zeta((r+1)b-r)\zeta(2(r+1)b-r)\prod_p\left(1-\frac{1}{p}\right)^2\left[\frac{1}{p}\left(1-\frac{1}{p^r}\right)\right.\\
&\left.\left(2-\frac{2}{p^{2(r+1)b-r}}-\frac{1}{p^{r+1}}+\frac{1}{p^{(r+1)b+1}}\right)+\left(1-\frac{1}{p^{2(r+1)b-r}}\right)\left(1-\frac{1}{p^{(r+1)b-r}}\right)\right].
\end{align*}  
\end{proof}
\section{Further Remarks and future Work}
Results related to the usual gcd or gcd-like functions obtained in \cite{Toth}, and the references therein, can analogously also be proved for the generalized gcd and lcm functions defined in this paper. Moreover, one can define 
variants of Ramanujan sums for generalized gcd. For example, for $j\in\mathbb{N}$, define
\[C^b_k(j):=\sum_{\substack{1\le q \le k \\ (q,k)_b=1}}\exp\left({\frac{2\pi i q j}{k}}\right).\]
Similarly, the Anderson-Apostol sums can be generalized to
\[s^b_k(j):=\sum_{d|(k,j)_b}f(d)g\left(\frac{k}{d}\right),\] for $j\in\mathbb{N}$, with arithmetic functions $f$ and $g$.
One can study the distribution of $C^b_k(j)$, and $s^b_k(j)$ by investigating its weighted mean, logarithmic mean, or moments of averages. For example, see Chan \cite{Chan}, and Robles \cite{robles}, where the authors study these aspects for Cohen sums. Ramanujan sums and their variations make appearances in singular series of the Hardy-Littlewood asymptotic formula for Waring problems and in the asymptotic formula of Vinogradov on sums of three primes. 
Another future direction is to study the behavior of the generalized gcd and lcm-sums along arithmetic progressions. To the best of the authors knowledge, mean value results in residue classes has not been studied even for the usual gcd and lcm-sums. Finally, one can also find the Fourier transform of the function of generalized gcd and generalized lcm.
\section*{Acknowledgements}
SC is supported by the Science and Engineering Research Board, Department of Science and Technology, Government of India under grant SB/S2/RJN-053/2018.

\end{document}